\newcommand\embed{\hookrightarrow}
\newcommand{\duality}[2]{\fourIdx{}{(V^s)^\prime}{}{V^s}{\langle #1, #2 \rangle}}
\newcommand\dela[1]{}
\theoremstyle{plain}
\newtheorem{theorem}{Theorem}[section]
\newtheorem{proposition}[theorem]{Proposition}
\newtheorem{lemma}[theorem]{Lemma}
\theoremstyle{definition}
\newtheorem{definition}[theorem]{Definition}
\theoremstyle{remark}
\newtheorem{remark}[theorem]{Remark}
\numberwithin{equation}{section} 
\numberwithin{figure}{section}   
\newcommand{\vect}[1]{\mathbf{#1}}
\newcommand{\bu}{\vect{u}}
\newcommand{\bv}{\vect{v}}
\newcommand{\bw}{\vect{w}}
\newcommand{\bx}{\vect{x}}
\newcommand{\bbf}{\vect{f}}
\newcommand{\bomega}{\boldsymbol{\omega}}
\newcommand{\field}[1]{\mathbb{#1}}
\newcommand{\nN}{\field{N}}
\newcommand{\nZ}{\field{Z}}
\newcommand{\nR}{\field{R}}
\newcommand{\cD}{\mathcal D}
\newcommand{\cV}{\mathcal V}
\newcommand{\nT}{\mathbb T}
\newcommand{\vphi}{\boldsymbol{\varphi}}
\newcommand{\maps}{\rightarrow}
\newcommand{\lp}{\left(}
\newcommand{\rp}{\right)}
\newcommand{\inv}{^{-1}}
\newcommand{\abs}[1]{\left\lvert#1\right\rvert}
\newcommand{\set}[1]{\left\{#1\right\}}
\newcommand{\ip}[2]{\left<#1,#2\right>}
\newcommand{\pnt}[1]{\left(#1\right)}
\newcommand{\norm}[1]{\left\|#1\right\|}
\title[Fractional Voigt]{Fractional Voigt-regularization of the 3D Navier--Stokes and Euler equations: Global well-posedness and limiting behavior}
\date{\today}
\author{Zdzis{\l}aw Brze{\'z}niak}
\address[Zdzis{\l}aw Brze{\'z}niak]{Department of Mathematics, 
University of York,
York,
YO10 5DD, UK}
\email[Zdzislaw Brzezniak]{zdzislaw.brzezniak@york.ac.uk}
\author{Adam Larios}
\address[Adam Larios]{Department of Mathematics, 
                University of Nebraska--Lincoln,
        Lincoln, NE 68588-0130, USA}
\email[Adam Larios]{alarios@unl.edu}
\author{Isabel Safarik}
\address[Isabel Safarik]{Cooperative Institute for Research in the Atmosphere (CIRA), Colorado State University, NOAA/OAR Global Systems Laboratory, Boulder, CO 80305, USA}
\email[Isabel Safarik]{isabel.safarik@colostate.edu}
\keywords{Turbulence Modeling, $\alpha$-models of turbulence, Euler-Voigt, Navier-Stokes-Voigt, Fractional Derivatives, Global existence, Blow-up criteria, Voigt-regularization}
\subjclass[2010]{Primary:
35A01, 
35B44, 
35B65, 
35Q30, 
35Q31, 
35Q35, 
76D03, 
76D05, 
76D17, 
76N10 
}
\begin{document}
\begin{abstract}
The Voigt regularization is a technique used to model turbulent flows, offering advantages such as sharing steady states with the Navier-Stokes equations and requiring no modification of boundary conditions; however, the parabolic dissipative character of the equation is lost. In this work we propose and study a generalization of the Voigt regularization technique by introducing a fractional power $r$ in the Helmholtz operator, which allows for dissipation in the system, at least in the viscous case.  We examine the resulting fractional Navier-Stokes-Voigt (fNSV) and fractional Euler-Voigt (fEV) and show that global well-posedness holds in the 3D periodic case for fNSV when the fractional power $r \geq \frac{1}{2}$ and for fEV when $r>\frac{5}{6}$. Moreover, we show that the solutions of these fractional Voigt-regularized systems converge to solutions of the original equations, on the corresponding time interval of existence and uniqueness of the latter, as the regularization parameter $\alpha \to 0$. Additionally, we prove convergence of solutions of fNSV to solutions of fEV as the viscosity $\nu \to 0$ as well as the convergence of solutions of fNSV to solutions of the 3D Euler equations as both $\alpha, \nu \to 0$. Furthermore, we derive a criterion for finite-time blow-up for each system based on this regularization.  These results may be of use to researchers in both pure and applied fluid dynamics, particularly in terms of approximate models for turbulence and as tools to investigate potential blow-up of solutions.
\end{abstract}

\maketitle
\thispagestyle{empty}

\section{Introduction}\label{secInt}

\noindent
One of the defining characteristics of turbulent fluid flow is the large range of spatial and temporal scales which can be seen physically as cascading eddies. This characteristic is a source of difficulty in both analysis and computation. Moreover, in many practical applications, the physically relevant aspects of the flow often involve the large scales, as seen, for instance, in weather prediction. Because of this, researchers have put effort into modeling the large-scale motion of the dynamics for turbulent flows by filtering out the smaller scales. One candidate for a subgrid-scale model, from \cite{Cao_Lunasin_Titi_2006}, is posed by applying the Helmholtz operator $(I - \alpha^2\triangle)$ to the time derivative of the Navier-Stokes equations:
\begin{subequations}\label{NSV}
\begin{alignat}{2}
\label{NSV_mo}
 (I - \alpha^2\triangle)\partial_t\bv + (\bv \cdot \nabla) \bv  + \nabla p &= \nu \triangle \bv + \bbf, \ \ \ & \text{ in } \Omega \times [0,T),\\
\label{NSV_div_free}
 \nabla \cdot \bv &= 0, & \text{ in } \Omega \times [0,T), \\
\label{NSV_IC}
 \bv(0) &= \bv^0, & \text{ in } \Omega,
\end{alignat}
\end{subequations}
where the boundary conditions are taken to be periodic on the torus $\Omega := \nT^3 = \nR^3 / \nZ^3 = [0,1]^3$ and $\bv$ is mean-free.
Note that the Navier-Stokes equations are formally recovered by setting $\alpha = 0$. When $\alpha, \nu >0$, we obtain the Navier-Stokes-Voigt\footnote{Sometimes incorrectly spelled ``Voight,'' likely due to transliteration inconsistencies when translating the German name ``Woldemar Voigt'' into Russian as  
``\foreignlanguage{russian}{Вольдемар Фогт}'' (Voldemar Fogt), as in, e.g., the Russian bibliography of W. Voigt in \cite{Khramov_1983}, and then re-translated back into English with the Russian ``\foreignlanguage{russian}{г}'' transliterated as the English ``gh.''
}(NSV) equations. 

If $\alpha >0$ and we set $\nu = 0$, we formally obtain the Euler-Voigt equations under periodic boundary conditions.
A noteworthy observation made in \cite{Cao_Lunasin_Titi_2006} is that the NSV equations behave like a damped hyperbolic system, see \cite{Kalantarov_Titi_2009}. Moreover, the instantaneous smoothing of initial data present in the Navier-Stokes equations is no longer present in NSV. This fact could imply that the NSV model is not a reasonable modification of the Navier-Stokes equations. The focus of this paper lies in examining a modification to the Voigt-regularization technique that can introduce fractional dissipation into the system. In particular, we explore the incorporation of a (fractional) power in the Helmholtz operator. We propose the following modification of the 3D Navier-Stokes equations, which we will call the fractional Navier-Stokes-Voigt (fNSV) equations: 
\begin{subequations}\label{fNSV}
\begin{alignat}{2}
\label{fNSV_mo}
(I+ (-\alpha^2\triangle)^r)\partial_t\bv + (\bv \cdot \nabla) \bv  + \nabla p &= \nu \triangle \bv + \bbf, \ \ \ & \text{ in } \Omega \times [0,T),\\
\label{fNSV_div_free}
\nabla \cdot \bv &= 0, & \text{ in } \Omega \times [0,T), \\
\label{fNSV_IC}
\bv(0) &= \bv^0, & \text{ in } \Omega,
\end{alignat}
\end{subequations}
where the parameter $r>0$ is the fractional exponent, boundary conditions are again periodic, and $\bv^0$ and $\bbf$ are mean-free (note that this implies that $\bv$ is mean-free). Global well-posedness was proved in the case $r = 1$ in \cite{Cao_Lunasin_Titi_2006, Oskolkov_1973, Oskolkov_1976, Oskolkov_1982}. In the present work, we show that global well-posedness also holds in the 3D case when $r>\frac{5}{6}$ if $\nu = 0$, and $r \geq \frac{1}{2}$ if $\nu >0$. 

A comparable technique has been explored in the context of the 2D Boussinesq equations in \cite{Ignatova_2023_BoussVoigt}. However, the application of power differs, in particular, the fractional Helmholtz operator is posed as $(I + \epsilon (- \triangle)^{1/2})^r$. Additionally, the power is maintained within more stringent constraints $r >1$. In terms of three dimensions, a similar concept was examined in \cite{Hani_2013}, but the model presented in that work is formulated in a deconvolutional manner. Furthermore, it is crucial to highlight that although a blow-up criterion was postulated in \cite{Hani_2013}, no convergence was demonstrated, a crucial aspect (to the best of our knowledge) for substantiating the proof of the blow-up criterion. In this paper, we address this gap by incorporating the necessary convergence analysis to support the asserted blow-up criterion.

The system \eqref{NSV} was first posed in 1973 by A.P. Oskolkov \cite{Oskolkov_1973, Oskolkov_1976, Oskolkov_1982} as a model for polymeric fluids (Kelvin-Voigt fluids). The parameter $\alpha$ introduces another length scale which is physically meaningful in this setting. In particular, $\alpha \geq 0$ is a given length scale parameter such that $\alpha^2 / \nu$ is the relaxation time of a viscoelastic fluid. However, system \eqref{NSV} is more commonly known as a regularization of the Navier-Stokes equations, as reintroduced in 2006 by Y. Cao, E. Lunasin, and E.S. Titi \cite{Cao_Lunasin_Titi_2006}, in which $\alpha$ is thought of as a regularization parameter. 

The Helmholtz operator $(I - \alpha^2 \triangle)$ was chosen for its use in the $\alpha$ models of turbulence (see e.g. \cite{Chen_Foias_Holm_Olson_Titi_Wynne_1998_PRL,   Cheskidov_Holm_Olson_Titi_2005, Foias_Holm_Titi_2002, Foias_Manley_Rosa_Temam_2001}), namely the NS-$\alpha$ model, as it demonstrated successful comparisons with experimental data \cite{Chen_Foias_Holm_Olson_Titi_Wynne_1998_PF, Chen_Foias_Holm_Olson_Titi_Wynne_1999}. Notably, unlike other $\alpha$-models, when the viscosity parameter $\nu$ is set to zero, the Euler-Voigt equations---also referred to as the inviscid simplified Bardina model due to contributions in \cite{Layton_Lewandowski_2006}---serve as a regularization for the Euler equations. Another advantage of the NSV model is that, in a bounded domain, the NSV equations do not require any additional (i.e. artificial) boundary conditions. This is particularly notable in the case of fNSV when $r > 1$.

Key areas of research and advancements for Voigt regularization include global well-posedness, higher order regularity, the existence of global and pullback attractors, stability, numerical methods, order reduction, and applications. These things have been explored in the context of a wide variety of nonlinear equations including 
the Navier-Stokes equations \cite{Larios_Titi_2009,
Larios_Pei_Rebholz_2018, Kalantarov_Levant_Titi_2009, Kalantarov_Titi_2009, Garcia_Luengo_Julia_Read_2012, Li_Qin_2013, Berselli_Bisconti_2012, Levant_Ramos_Titi_2009, Berselli_Spirito_2017, Borges_Ramos_2013, Ebrahimi_Holst_Lunasin_2012, Gal_Medjo_2013_MMAS, Yuming_Qin_2025_DCDS},
the magnetohydrodynamic (MHD) equations \cite{Catania_2009, Catania_Secchi_2009, Larios_Titi_2010_MHD, Pei_2021}, 
the surface quasi-geostrophic (SQG) equations \cite{Khouider_Titi_2008}, and the
Boussinesq equations \cite{ Larios_Lunasin_Titi_2013, Bisconti_Catania_2021}, but by no means is this a comprehensive list. 
Numerical methods for a Voigt regularization have been explored using finite element methods for the Navier-Stokes and MHD equations \cite{Kuberry_Larios_Rebholz_Wilson_2012, Rong_Fiodilino_Shi_Cao_2022, Layton_Rebholz_2013_Voigt, Cuff_Dunca_Manica_Rebholz_2015}, 
as well as with pseudo-spectral methods for the Euler equations \cite{DiMolfetta_Krstlulovic_Brachet_2015, Larios_Petersen_Titi_Wingate_2015}. 
See also \cite{Berselli_Kim_Rebholz_2016, Gao_Sun_2012, Niche_2016_JDE, Ramos_Titi_2010, Tang_2014, Zhao_Zhu_2015, Zang_2022, Bohm_1992} and the references therein. 

We also mention several related works. Namely, \cite{Berselli_Kim_Rebholz_2016} compares a reduced order approximate deconvolutional model with Voigt model;
\cite{Gao_Sun_2012, Tang_2014} discuss Stochastic NSV;
\cite{Niche_2016_JDE, Zhao_Zhu_2015} consider long term behavior of NSV solutions;
\cite{Ramos_Titi_2010} shows statistical properties using the notation of invariant measures;
\cite{Zang_2022} analyzes the boundary layer for Euler-Voigt; and
\cite{Bohm_1992} shows existence of weak solutions of the NSV equations in intermediate spaces between some of the usual spaces for weak solutions of the Navier-Stokes equations.

In the limiting case as $\alpha \to 0$, solutions of the NSV and Euler-Voigt equations converge to solutions of the Navier-Stokes and Euler equations, respectively. Because of this convergence result, the Voigt regularization has been used in the context of shock-capturing specifically through the use of a blow-up criterion 
\cite{Gibbon_Titi_2013_Blowup,
 Larios_Pei_Rebholz_2018, Larios_Titi_2015_BC_Blowup, Larios_Petersen_Titi_Wingate_2015, Larios_Titi_2009, Khouider_Titi_2008}.
In this paper, we prove similar results for fractional cases.

The study of blow-up criteria for the Euler and Navier-Stokes equations has evolved significantly over the years. The seminal work \cite{Beale_Kato_Majda_1984} established a key blow-up criterion for the 3D Euler equations by proving that singularity formation is contingent upon the time-integrated vorticity becoming unbounded. Specifically, a solution to the 3D Euler equations can develop a singularity in finite time if and only if the integral of the maximum vorticity over time diverges. This result links the blow-up to the behavior of the vorticity field, providing a precise condition under which solutions can break down. The result was extended to the deformation tensor in the max norm \cite{Ponce_1985} and later to bounded domains \cite{Ferrari_1993}. The direction of vorticity in blow-up scenarios was explored in \cite{Constantin_Fefferman_1993}. Another blow-up criterion for the Euler equations was provided in \cite{Kozono_Taniuchi_2000}. A unified blow-up framework \cite{Cheskidov_Shvydkoy_2014_unified_blow_up} helped consolidate these criteria, with further improvements made in subsequent works. On the computational side, early studies suggested that vortex stretching could lead to finite-time blow-up \cite{Morf_Orszag_Frisch_1980_computational_blow_up}, supported by later works \cite{Brachet_Meiron_Orszag_Nickel_Morf_Frisch_1983_JFM,Kerr_1993}. Other simulations examined vorticity moments and regularity in Navier-Stokes flows \cite{Donzis_Gibbon_Gupta_Kerr_Pandit_Vincenzi_2013,Gibbon_Bustamante_Kerr_2008,Bustamante_Brachet_2012}. Sharp analyses of potential blow-up times in MHD flows were explored in \cite{Brachet_Bustamante_Krstulovic_Mininni_Pouquet_Rosenberg_2013}, while simulations of potential Euler singularities were presented in \cite{Luo_Hou_2014_Euler_BlowUp,Luo_Hou_2013_Potentially_Singular}. Sharper estimates on supremum norms and singularity times were achieved through simulations of a one-parameter family of models \cite{Mulungye_Lucas_Bustamante_2015}.

We note that in the recent arXiv preprint \cite{Chen_Hou_2022_Euler_Blow_up} has claimed a proof of blow-up for the 3D Euler equations in a case involving a bounded domain, but we do not comment further on this work as it has not been published at the time of this writing.  However, even if such a result were true, blow-up in the case of periodic boundary conditions does not automatically follow: see, e.g., \cite{Larios_Titi_2015_BC_Blowup} which pointed out that the PDE $u_t = \triangle u + |\nabla u|^4$ blows up in finite time under Dirichlet boundary conditions, but is globally well-posed in the case of periodic boundary conditions.  Hence, showing blow-up of the 3D incompressible Euler equations still remains an extremely challenging open problem, at least in the periodic case.  

This paper is organized as follows: in Section \ref{sec:Preliminaries} we lay out notation and state definitions and preliminary theorems and results for reference; in Section \ref{sec:EV_GWP} we prove global well-posedness for the fractional Euler-Voigt problem; in Section \ref{sec:NSV_GWP}, we consider the viscous case and prove global well-posedness for the fractional NSV problem; in Section \ref{sec:Convergence} we prove convergence as $\alpha \to 0$ for both fixed $\nu > 0$ and $\nu = 0$ as well as show convergence as $\alpha, \nu \to 0$ at the same time; in Section \ref{sec:Blow_up}, we pose a blow-up criterion for the Navier-Stokes and Euler equations, and concluding remarks are in the last section.

\section{Preliminaries}\label{sec:Preliminaries}
\noindent
In this section, we lay out our notation and state some basic results. Let $\Omega$ be the unit torus, i.e., $\nT^3 = \nR^3 \setminus \nZ^3 = [0,1]^3$. The vector $\bu = (u_1(\bx,t), u_2(\bx,t), u_3(\bx,t))$ denotes the velocity field of a fluid at position $\bx = (x_1, x_2, x_3) \in \Omega$ and time $t \in [0, T)$ where $T>0$. Pressure is denoted by the scalar quantity $p  = p(\bx,t)$, the body force on the fluid is given by $\bbf = \bbf(\bx,t) = (f_1(\bx,t), f_2(\bx,t), f_3(\bx,t))$, and $\nu >0$ is the kinematic viscosity. The physical no-slip case is represented by the Dirichlet boundary conditions $\bv \big|_{\partial \Omega} = \boldsymbol{0}$. Nevertheless, the equations, when subjected to periodic boundary conditions, retain numerous---though not all---structures found in the Dirichlet scenario. 

We define the relevant spaces with respect to the periodic boundary conditions on the unit torus $\nT^3$. With this in mind, we denote the following divergence-free space of smooth functions
\begin{subequations}
\begin{align}
\mathcal{V} &:=\set{\bu\in C^\infty(\nT^3;\nR^3):\nabla\cdot\bu= 0 \text{ and }\int_\Omega\bu\,d\bx= \boldsymbol{0}}.
\end{align}
\end{subequations}
Let $L^p$ and $H^s$ denote the usual Lebesgue and Sobolev spaces on either $\Omega$ or $\nT^3$. 
We denote the (real) $L^2$ inner-product and $H^s$ Sobolev norm by 
\begin{align}\label{Hs_norm}
    (\bu , \bv) : = \sum\limits_{j = 1}^3 \int_\Omega u_j (\bx) v_j(\bx) \, d\bx, \ \ \ \ \ \norm{\bu}_{H^s} := \lp \sum\limits_{|\alpha|\leq m} \norm{D^\alpha \bu}_{L^2}^2 \rp ^{1/2},
\end{align}
here $\alpha = (\alpha_1, \alpha_2, \alpha_3)$ and $D^\alpha \bu = \partial_1^{\alpha_1}\partial_2^{\alpha_2}\partial_3^{\alpha_3}$. For brevity, we will use the notation $L^2(\Omega) \equiv L^2$ and $H^s(\Omega)\equiv H^s$ throughout the paper. 
Using the standard Lions notation, we denote $H$ and $V$ to be the closures of $\mathcal{V}$ in the $L^2$ and $H^1$ norms respectively. Furthermore, we denote $V^s$ to be the closure of $\mathcal{V}$ in the $H^s$ norm for $s > 0$. Moreover, if we put $s = 1$, then $V^1 =: V$. 

Below, we state several standard results about $A$, all of which can be found in, e.g., \cite{Constantin_Foias_1988,Doering_Gibbon_1995_book,Temam_2001_Th_Num}.
We denote by $P_\sigma:L^2\maps H$ the Leray-Helmholtz projection operator onto divergence-free space and denote the Stokes operator 
\begin{align}
 A:=-P_\sigma\triangle,
\end{align}    
\begin{align}
\mathcal{D}(A):=H^2(\nT^3)\cap H,
\end{align}
where $\triangle$ is the Laplace operator in the Hilbert space $L^2(\mathbb{T}^3,\mathbb{R}^3)$.
It is known that $A$ is  strictly positive self-adjoint and invertible operator in $H$. In particular, 
for $\vphi\in \mathcal{D}(A)$, we have the norm equivalence $\|A\vphi\|_{L^2}\cong\|\vphi\|_{H^2}$. 
The inverse $A\inv$ is a compact operator in $H$.  Therefore, there exists an orthonormal basis of eigenfunctions of $A$, say $\set{\vphi_k}_{k\in\nN}$, with corresponding positive eigenvalues $\set{\mu_k}_{k\in\nN}$ ordered to be decreasing.  Setting $\lambda_k:=1/\mu_k$, we have $A\vphi_k=\lambda_k\vphi_k$ for each $k\in\nN$, and $0<\lambda_1\leq \lambda_2\cdots$. Moreover, $\lambda_k\rightarrow\infty$ as $k\rightarrow\infty$.  The fractional Stokes operator, i.e. the fractional power of the Stokes operator $A$  is denoted by $A^s $. It satisfies 
\begin{align}
A^s \bu &= \sum\limits_{j=1}^\infty \lambda_j^s u_j \vphi_j \text{ for } \bu = \sum\limits_{j=1}^\infty u_j \vphi_j, \ \bu \in \cD(A^s),
\end{align}
where $s>0$, and the domain of $A^s$ is given by 
\begin{align}\notag
    \cD(A^s) := H^{2s}(\nT^3)\cap H = \set{\bu \in H : \bu = \sum\limits_{j=1}^\infty u_j \vphi_j \text{ and } \sum\limits_{j=1}^\infty \lambda_j^{s} \abs{u_j}^2 < \infty \text{ for all } u_j \in \nR}.
\end{align}
By Parseval's Identity, the $H^s$ norm \eqref{Hs_norm} is equivalently 
\begin{align}\label{Hs_norm_parseval}
    \norm{\bu}_{H^s}^2 \equiv \sum\limits_{j=1}^\infty \lambda_j^s \abs{u_j}^2.
\end{align}
It is well known that the operator $-\triangle$ has similar properties as the Stokes operator $A$. In particular it is non-negative and self-adjoint so that its fractional powers are well defined. 

Applying $P_\sigma$ to \eqref{fNSV}, it can be shown that the functional form of the fractional Navier-Stokes-Voigt equations are given by 
\begin{align}\notag 
    (I + P_\sigma(-\alpha^2\triangle)^r)\frac{d}{d t} \bu + B(\bu,\bu) + \nu A\bu  = \vect{0},
\end{align}
where we use the conventional notation 
\begin{align}\label{B_operator}
    B(\bu, \bv) : = P_\sigma((\bu \cdot \nabla) \bv),
\end{align}
for $\bu, \bv \in V$.
However, on a periodic domain, it is equivalent (and notionally simpler) to instead write 
\begin{align}\label{fNSV_functional}
    (I + \alpha^{2r} A^r)\frac{d}{d t} \bu + B(\bu,\bu) + \nu A\bu = \vect{0}.
\end{align}
Adding a sufficiently smooth, mean-free body force $\bbf$ causes no additional difficulty in the proof, however for the sake of brevity we do not include it in the following work.
\begin{lemma}
On a periodic domain, we have 
\begin{equation}\label{def_fractional_Stokes}\notag 
  P_\sigma(-\triangle)^r=  A^r.  
\end{equation}
\end{lemma}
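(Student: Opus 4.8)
The plan is to exploit the special structure of the periodic setting, in which both $P_\sigma$ and $(-\triangle)^r$ are simultaneously diagonalized by the Fourier basis. Concretely, I would pass to Fourier series, recognize each operator as a Fourier multiplier---$(-\triangle)^r$ scalar-valued and $P_\sigma$ matrix-valued---observe that the two commute mode-by-mode, and then restrict to the divergence-free subspace $H$, where $-\triangle$ agrees with the Stokes operator $A$ and the functional calculus can be matched.

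First I would expand a mean-free field $\bu$ in its Fourier series $\bu(\bx) = \sum_{\bk \in \nZ^3 \setminus \set{\vect 0}} \hat\bu_\bk\, e^{2\pi i \bk \cdot \bx}$, with $\hat\bu_\bk \in \nC^3$ subject to the reality condition $\hat\bu_{-\bk} = \overline{\hat\bu_\bk}$. In this representation $(-\triangle)^r$ acts as the scalar multiplier $\hat\bu_\bk \mapsto (4\pi^2 |\bk|^2)^r \hat\bu_\bk$, while the Leray--Helmholtz projection acts as the matrix multiplier $\hat\bu_\bk \mapsto \mathbb{P}_\bk \hat\bu_\bk$, where $\mathbb{P}_\bk := I - |\bk|^{-2}\, \bk \otimes \bk$ is the orthogonal projection of $\nC^3$ onto $\bk^\perp$. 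The key step is then immediate: in each mode the scalar $(4\pi^2|\bk|^2)^r$ commutes with the matrix $\mathbb{P}_\bk$, so that
\begin{align}\notag
P_\sigma (-\triangle)^r \bu = \sum_{\bk} (4\pi^2|\bk|^2)^r\, \mathbb{P}_\bk \hat\bu_\bk\, e^{2\pi i \bk\cdot\bx} = (-\triangle)^r P_\sigma \bu .
\end{align}

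Next I would identify the spectrum. The divergence-free Fourier modes $\be\, e^{2\pi i \bk\cdot\bx}$ with $\be \perp \bk$ are precisely the eigenfunctions $\vphi_j$ of $A$, and on them $A\vphi_j = -\triangle \vphi_j = 4\pi^2|\bk|^2 \vphi_j$; hence the eigenvalues $\lambda_j$ of $A$ are exactly the numbers $4\pi^2|\bk|^2$. Thus on $H$ the operators $A$ and the restriction of $-\triangle$ share the same spectral resolution, and by the uniqueness of the functional calculus their fractional powers coincide, $A^r = (-\triangle)^r$ on $\cD(A^r) = H^{2r}(\nT^3)\cap H$. For such divergence-free $\bu$ one has $P_\sigma \bu = \bu$, so combining this with the commutation above yields $P_\sigma(-\triangle)^r \bu = (-\triangle)^r P_\sigma \bu = (-\triangle)^r \bu = A^r \bu$, which is the asserted identity.

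The hard part is not computational but conceptual: everything hinges on the mode-by-mode commutation of $P_\sigma$ with $(-\triangle)^r$ and on matching the two spectral calculi. This is exactly the step that fails under no-slip Dirichlet conditions, where $P_\sigma$ and $\triangle$ do not commute and the eigenfunctions of $A$ are no longer simple Fourier modes, so I would take care to emphasize that the argument is genuinely periodic-specific. The remaining care is domain bookkeeping, ensuring $\bu \in \cD(A^r)$ so that $(-\triangle)^r\bu$ is well defined before $P_\sigma$ is applied.
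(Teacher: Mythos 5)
Your proof is correct. Note, however, that the paper does not actually prove this lemma: its entire ``proof'' is the citation ``See, e.g., \cite{Constantin_Foias_1988},'' so there is no argument in the text to compare against. What you have written is the standard self-contained justification that such a citation points to: on $\nT^3$ both $P_\sigma$ and $(-\triangle)^r$ are Fourier multipliers, with $P_\sigma$ acting on the mode $\bk\neq\vect{0}$ by $\mathbb{P}_\bk=I-|\bk|^{-2}\,\bk\otimes\bk$ and $(-\triangle)^r$ by the scalar $(4\pi^2|\bk|^2)^r$; a scalar commutes with every matrix, so the operators commute mode-by-mode, and on $H$ one has $P_\sigma\bu=\bu$ and $-\triangle = A$ with common eigenfunctions $\be\,e^{2\pi i\bk\cdot\bx}$, $\be\perp\bk$, so the two functional calculi agree. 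Two minor bookkeeping points you may wish to make explicit: first, the identity should be read as an identity of unbounded operators on $H$ (equivalently, precomposed with $P_\sigma$ on all of $L^2$, since for general $\bu$ one gets $P_\sigma(-\triangle)^r\bu=A^rP_\sigma\bu$); second, the eigenfunctions of $A$ are real linear combinations of the complex exponential modes rather than the modes themselves, which affects nothing in the argument. Your closing remark that the commutation of $P_\sigma$ with the Laplacian is genuinely periodic-specific and fails under no-slip boundary conditions is exactly the right caveat and is consistent with the paper's own emphasis on the periodic setting.
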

\begin{proof}
See, e.g., \cite{Constantin_Foias_1988}. 
\end{proof}

We list several important properties of $B$ (defined by \eqref{B_operator}). Similar results can be found for example in \cite{Constantin_Foias_1988, Foias_Manley_Rosa_Temam_2001, Temam_1995_Fun_Anal, Temam_2001_Th_Num}.

Moreover, by identifying the dual space $H^\prime$  with the space  $H$,   the dual space $H^\prime$ can be identified with a subspace of ${V}^\prime$ and hence we have the following  Gelfand triple
\begin{equation} \label{eqn-Gelfand triple}
 V \embed  H \cong H^\prime   \embed V^{\prime } .
\end{equation}
A similar property hold for $V^s$.  If $s>1$, then we have 
\begin{equation} \label{eqn-Gelfand triple-s1}
 V^s \embed  V \embed  H \cong H^\prime   \embed V^{\prime } \embed (V^s)^{\prime },
\end{equation}
while, when $s\in (0,1)$ we have 
\begin{equation} \label{eqn-Gelfand triple-s2}
 V \embed   V^s \embed  H \cong H^\prime   \embed (V^s)^{\prime } \embed V^{\prime }.
\end{equation}

\begin{lemma}\label{lemma:bilinear_estimates}
The operator $B$ defined in \eqref{B_operator} is a bilinear form which can be extended as a continuous map $B : V^s \times V^s \to (V^s)'$ for $s \geq \frac12$.
\begin{enumerate}[(i)]
    \item For $\bu, \bv \in V^s$

    \begin{align} \label{switch}
        \duality{B(\bu, \bv)}{\bw} = - \duality{B(\bu, \bw)}{\bv},
    \end{align}
    and therefore 
    \begin{align}
  \duality{B(\bu, \bv)}{\bv}     
  = 0.
    \end{align}

    \item Furthermore, we have the following estimates for all $\bu, \bv, \bw \in V^s$ and $0 < \epsilon \leq \frac{6s - 5}{2}$: 
    \begin{align}
        \label{r_52_2r_r}
        \abs{\duality{B(\bu, \bv)}{\bw}} 
        & \leq 
        C \norm{\bu}_{H^s}\norm{\bv}_{H^{\frac{5}{2} - 2s}} \norm{\bw}_{H^s}
        \\ & \leq \notag
        C \norm{\bu}_{H^s}\norm{\bv}_{H^{s}} \norm{\bw}_{H^s},
        & \text{ for } \ \frac{5}{6} \leq s \leq \frac{3}{2},
        \\ \label{r_r_r_epsilon} 
        \abs{\duality{B(\bu, \bv)}{\bw}} 
        & \leq 
        C \norm{\bu}_{H^s}\norm{\bv}_{H^s} \norm{\bw}_{H^{s-\epsilon}}
        \\ & \leq \notag
        C \norm{\bu}_{H^s}\norm{\bv}_{H^s} \norm{\bw}_{H^{s}},
        & \text{ for } \  \frac{5}{6} < s \leq \frac{5}{4},
        \\  \label{r_2_32_r}
        \abs{\duality{B(\bu, \bv)}{\bw}} 
        & \leq 
        C \norm{\bu}_{H^s}\norm{\nabla \bv}_{L^2} \norm{\bw}_{H^{\frac{3}{2} - s}} 
        \\ & \leq \notag
        C \norm{\bu}_{H^s}\norm{\nabla \bv}_{L^2} \norm{\bw}_{H^{1}}, 
        & \text{ for } \ \frac{1}{2} \leq s \leq \frac{3}{2}.
    \end{align}
\end{enumerate}
\end{lemma}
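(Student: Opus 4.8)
I would first prove every identity and bound for $\bu,\bv,\bw\in\cV$ and then pass to $V^s$ by density, using the estimates of part (ii) themselves to justify the continuous extension. For the antisymmetry \eqref{switch}, write the pairing as the trilinear integral $\langle B(\bu,\bv),\bw\rangle=\sum_{i,j}\int_\Omega u_i\,(\partial_i v_j)\,w_j\,d\bx$ and integrate by parts in the variable $x_i$; periodicity removes the boundary contribution and leaves $-\sum_{i,j}\int_\Omega \partial_i(u_i w_j)\,v_j\,d\bx$. Expanding the derivative and discarding the term $(\partial_i u_i)w_j v_j$ by the divergence-free condition $\nabla\cdot\bu=0$ yields exactly $-\langle B(\bu,\bw),\bv\rangle$, which is \eqref{switch}; setting $\bw=\bv$ then gives the orthogonality $\langle B(\bu,\bv),\bv\rangle=0$.

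The common engine for part (ii) is Hölder's inequality, $|\langle B(\bu,\bv),\bw\rangle|\le\sum_{i,j}\int_\Omega|u_i|\,|\partial_i v_j|\,|w_j|\,d\bx\le\|\bu\|_{L^{p_1}}\|\nabla\bv\|_{L^{p_2}}\|\bw\|_{L^{p_3}}$ with $\tfrac1{p_1}+\tfrac1{p_2}+\tfrac1{p_3}=1$, combined with the Sobolev embeddings $H^a(\nT^3)\embed L^p$, $\tfrac1p=\tfrac12-\tfrac a3$ for $0\le a<\tfrac32$. Estimate \eqref{r_2_32_r} is the clean case, where all three factors land in honest $L^p$ spaces: take $\nabla\bv\in L^2$, $\bu\in H^s\embed L^{p_1}$ with $\tfrac1{p_1}=\tfrac12-\tfrac s3$, and $\bw\in H^{3/2-s}\embed L^{p_3}$ with $\tfrac1{p_3}=\tfrac s3$. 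The exponents satisfy $\tfrac1{p_1}+\tfrac12+\tfrac1{p_3}=1$ and all lie in the admissible range for $\tfrac12\le s<\tfrac32$ (the endpoint $s=\tfrac32$ is handled using $H^{3/2}\embed L^q$ for every finite $q$). The second, weaker line follows from $\|\bw\|_{H^{3/2-s}}\le\|\bw\|_{H^1}$ when $s\ge\tfrac12$.

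The remaining two estimates \eqref{r_52_2r_r} and \eqref{r_r_r_epsilon} are genuinely borderline: at the stated endpoints the total regularity equals the critical Sobolev exponent $\tfrac32$ in three dimensions, and the differentiated factor sits in a negative-order space ($\nabla\bv\in H^{3/2-2s}$, respectively $H^{s-1}$, negative once $s>\tfrac34$), so a direct Hölder bound through $L^p$ embeddings is unavailable for that factor. The plan is to pass to duality: for \eqref{r_52_2r_r} write $\langle B(\bu,\bv),\bw\rangle=\sum_{i,j}\langle\partial_i v_j,\,u_i w_j\rangle$ as a pairing of $H^{3/2-2s}$ with its dual $H^{2s-3/2}$, and then control $\|u_i w_j\|_{H^{2s-3/2}}\le C\|u_i\|_{H^s}\|w_j\|_{H^s}$ by the sharp Sobolev multiplication estimate $H^s\cdot H^s\embed H^{2s-3/2}$; this holds precisely because the homogeneity is critical while the side condition $2s-\tfrac32<s$ is strict for $s<\tfrac32$, with the endpoint $s=\tfrac32$ treated separately. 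Combining these two bounds gives \eqref{r_52_2r_r}. Estimate \eqref{r_r_r_epsilon} follows the identical template with $\bw$ in the lower space $H^{s-\epsilon}$: one pairs $\partial_i v_j\in H^{s-1}$ against $u_i w_j$ and uses $H^s\cdot H^{s-\epsilon}\embed H^{1-s}$, where the admissible range $0<\epsilon\le\tfrac{6s-5}{2}$ is exactly the condition that keeps this product estimate (sub)critical, and for $\epsilon$ strictly below the threshold one may even work with all finite Hölder exponents.

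I expect the main obstacle to be the critical, equality-homogeneity Sobolev multiplication step: one must invoke the product estimate at the borderline and verify carefully the strict side conditions that keep it valid throughout each stated $s$-range, together with the separate handling of the embedding endpoints ($s=\tfrac32$ and the forbidden $L^\infty$ cases). With the three bounds in hand, the density extension from $\cV$ to $V^s$ is routine, and the asserted continuity $B:V^s\times V^s\to(V^s)'$ follows by combining these estimates with the Gelfand-triple embeddings \eqref{eqn-Gelfand triple-s1}--\eqref{eqn-Gelfand triple-s2} on the appropriate range of $s$.
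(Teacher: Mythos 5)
Your proposal is correct, and for the two borderline estimates it takes a genuinely different route from the paper. Part (i) (integration by parts plus the divergence-free condition) and estimate \eqref{r_2_32_r} (three-factor H\"older with $\bu\in L^{6/(3-2s)}$, $\nabla\bv\in L^2$, $\bw\in L^{3/s}$) coincide with what the paper does. For \eqref{r_52_2r_r} and \eqref{r_r_r_epsilon}, however, the paper also argues by direct H\"older, placing $\nabla\bv$ in $L^{3/(2s)}$ (resp.\ $L^{6/(5-2s)}$) and then invoking the ``embedding'' $\norm{\nabla\bv}_{L^q}\leq C\norm{\nabla\bv}_{H^{3/2-2s}}$ (resp.\ $\norm{\nabla\bv}_{L^q}\leq C\norm{\nabla\bv}_{H^{s-1}}$); since $3/2-2s<0$ for $s>3/4$ (resp.\ $s-1<0$ for $s<1$), this step is only the formal Sobolev numerology rather than a literal embedding on the stated range. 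You instead recognize that the differentiated factor sits in a negative-order space and run the pairing by duality, $\langle\partial_i v_j,\,u_iw_j\rangle$, controlling $u_iw_j$ via the critical product estimates $H^s\cdot H^s\embed H^{2s-3/2}$ and $H^s\cdot H^{s-\epsilon}\embed H^{1-s}$; your verification of the side conditions ($2s-\tfrac32<s$ strictly for $s<\tfrac32$, and $0<\epsilon\leq\tfrac{6s-5}{2}$ matching exactly the subcriticality constraint) is sound. This is essentially the proof of Proposition 6.1 of Constantin--Foias, which the paper cites as the actual source of the lemma, and it is the more careful of the two arguments at the delicate points. One small caveat: your handling of the endpoint $s=\tfrac32$ in \eqref{r_2_32_r} via $H^{3/2}\embed L^q$ for finite $q$ does not quite recover the stated bound with $\norm{\bw}_{H^0}$ (the third H\"older exponent must then exceed $2$); the paper has the same silent issue, and since only $s\leq\tfrac54$ is used elsewhere, it is immaterial.
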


Although Lemma \ref{lemma:bilinear_estimates} is a special case of Proposition 6.1 in \cite{Constantin_Foias_1988}, we provide proof here for the convenience of the reader. 

\begin{proof}[Proof of (ii)]
For \eqref{r_52_2r_r}, set $p = \frac{6}{3-2s}$ and $q = \frac{3}{2s}$. Then using the Gagliardo-Nirenberg Theorem (see, e.g., \cite{Brezis_2010_functional}), we obtain
\begin{align}
    \abs{\duality{B(\bu, \bv)}{\bw}} 
    & \leq 
    \norm{\bu}_{L^p}\norm{\nabla \bv}_{L^q} \norm{\bw}_{L^p}
    \\ & \leq \notag
    C \norm{\bu}_{H^s}\norm{\nabla \bv}_{H^{\frac{3}{2} - 2r}} \norm{\bw}_{H^s}
    \\ & \leq \notag
    C \norm{\bu}_{H^s}\norm{\bv}_{H^{\frac{5}{2} - 2s}} \norm{\bw}_{H^s}.
\end{align}
Moreover, for $s \geq \frac{5}{6}$, $\norm{\bv}_{H^{\frac{5}{2}-2s}} \leq \norm{\bv}_{H^s}$. 

For \eqref{r_r_r_epsilon}, fix $0 < \epsilon \leq \frac{6s - 5}{2}$ and set $p = \frac{6}{-2+4s-2\epsilon}$, $q = \frac{6}{3 - 2(s-1)}$, and $k = \frac{6}{3 - 2(s- \epsilon)}$. Then 
\begin{align*}
    \abs{\duality{B(\bu, \bv)}{\bw}} 
    & \leq 
    \norm{\bu}_{L^p}\norm{\nabla \bv}_{L^q} \norm{\bw}_{L^k}.
\end{align*}
We see that $\norm{\bu}_{L^p} \leq \norm{\bu}_{H^r}$, $\norm{\nabla \bv}_{L^q} \leq C\norm{\nabla \bv}_{H^{s-1}}$, and $\norm{\bw}_{L^k}\leq C\norm{\bw}_{H^{s-\epsilon}}$ for $r = 3 \lp \frac{1}{2} - \frac{1}{p}\rp = \frac{5 - 4s + 2\epsilon}{2}$ which is no more than $s$ for $0 < \epsilon \leq \frac{6s -5}{2}$. Hence, we obtain
\begin{align}
    \abs{\duality{B(\bu, \bv)}{\bw}} 
    & \leq 
    \norm{\bu}_{L^p}\norm{\nabla \bv}_{L^q} \norm{\bw}_{L^k}
    \\ & \leq \notag 
    C \norm{\bu}_{H^s}\norm{\bv}_{H^s} \norm{\bw}_{H^{s-\epsilon}}.
\end{align}

Finally, \eqref{r_2_32_r} is justified by taking $p = \frac{6}{3-2s}$ and $q = \frac{3}{s}$, to obtain 
\begin{align}
    \abs{\duality{B(\bu, \bv)}{\bw}} 
    & \leq 
    \norm{\bu}_{L^p}\norm{\nabla \bv}_{L^2} \norm{\bw}_{L^q} 
    \\ & \leq \notag
    C \norm{\bu}_{H^s}\norm{\nabla \bv}_{L^2} \norm{\bw}_{H^{\frac{3}{2} - s}}.
\end{align}
Moreover, for $s \geq \frac{1}{2}$, it holds that $\norm{\bw}_{H^{\frac{3}{2} - s}} \leq C\norm{\bw}_{H^1}$.
\end{proof}

For each $N \in \nN$, let $P_N$ denote the orthogonal projection in $H$ via $P_N : H \to H^N:=\operatorname{span}\set{\boldsymbol{\varphi}_k}_{k \leq N}$.  For $0\leq s_1<s_2<\infty$, we recall the following well-known projection estimate for any $\bu\in H^{s_2}$:
\begin{align}\label{QNproj}
 \|(I-P_N)\bu\|_{H^{s_1}}^2 
 =
 \sum_{\substack{k\in\nN\\k> N}}\lambda_k^{s_1}|\hat{\bu}_k|^{2}
 \leq
 \sum_{\substack{k\in\nN\\k> N}}\lambda_k^{s_1}\pnt{\frac{\lambda_k}{\lambda_N}}^{s_2-s_1}|\hat{\bu}_k|^{2}
 \leq
 \lambda_N^{-s_2}\|\bu\|_{H^{s_2}}^2\rightarrow0,
\end{align}
as $N\rightarrow\infty$.

 \section{Global Well-Posedness of the fractional Euler-Voigt equations}\label{sec:EV_GWP}
\noindent 
In this section we prove global well-posedness for the fractional Euler-Voigt equations, \eqref{NSV} with $\nu = 0$. 

\begin{definition}\label{def_strong_sol}
For $s\geq 0$, let $\bu^0\in H^s$.  Suppose $T>0$, and $\nu = 0$.
We say that $\bu$ is a strong solution to the fractional Euler-Voigt equations 
\eqref{fNSV}, or equivalently \eqref{fNSV_functional}, provided that $\bu\in C([0,T], V^s) \cap L^2(0,T;V^{\frac{1+s}{2}})$, and $\bu$ satisfies
\begin{equation}\label{eqn-strong}
\begin{aligned} 
       \bu(t)  + \alpha^{2s}A^{s} \bu(t) &- \bu^0 - \alpha^{2s} A^{s} \bu^0 
     & = 
    -\int_0^t  B(\bu(\tau), \bu(\tau)) \, d\tau,
\end{aligned}
\end{equation}
for all $t$,  where the  above equality is understood in the sense of $(V^s)^\prime$. Equivalently, it can be understood in the following weak sense,  i.e.

\begin{align} 
    & \quad
    (\bu(t), \bv) + \alpha^{2s}\ip{A^{s/2} \bu(t)}{A^{s/2} \bv} - (\bu^0, \bv) 
    - \alpha^{2s} \int_0^t \ip{A^{s/2} \bu^0}{A^{s/2} \bv} 
    \\ & = \notag 
    -\int_0^t \ip{B(\bu(\tau), \bu(\tau))}{\bv} \, d\tau,
\end{align}
for all $t$ and all $\bv\in V^s$ where $\ip{\cdot}{\cdot} \equiv \duality{\cdot}{\cdot}$.
\end{definition}

\begin{theorem}[Global Existence, Uniqueness] \label{thm:EV_existence_uniqueness}
Suppose $\nu = 0$, $\alpha>0$,  $r > \frac{5}{6}$, and $\bu^0\in H^r$.  Then for arbitrary $T>0$, there exists a unique solution to \eqref{fNSV_functional} in the sense of Definition \ref{def_strong_sol} with $s = r$.
\end{theorem}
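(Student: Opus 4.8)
The plan is to build solutions by a Galerkin scheme, taking advantage of the fact that the Voigt term turns \eqref{fNSV_functional} into a genuine evolution equation in $V^r$ rather than a quasilinear system. First I would fix $N$, project onto $H^N=\operatorname{span}\{\vphi_k\}_{k\le N}$, and seek $\bu_N\in H^N$ solving $(I+\alpha^{2r}A^r)\frac{d}{dt}\bu_N+P_NB(\bu_N,\bu_N)=\vect{0}$ with $\bu_N(0)=P_N\bu^0$. Since $I+\alpha^{2r}A^r$ is self-adjoint, positive, commutes with $P_N$, and is boundedly invertible, this is a finite-dimensional ODE with a quadratic (hence locally Lipschitz) vector field, so Cauchy--Lipschitz yields a unique solution on a maximal interval $[0,T_N)$.

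The crucial step is the a priori estimate, and here the structure of the equation does almost all the work. Pairing the Galerkin equation with $\bu_N$ and invoking the antisymmetry $\duality{B(\bu_N,\bu_N)}{\bu_N}=0$ from \eqref{switch}, the entire nonlinear contribution cancels, leaving
\[
\frac{d}{dt}\Big(\norm{\bu_N}_{L^2}^2+\alpha^{2r}\norm{\bu_N}_{H^r}^2\Big)=0.
\]
Thus the energy $\norm{\bu_N(t)}_{L^2}^2+\alpha^{2r}\norm{\bu_N(t)}_{H^r}^2$ is exactly conserved; because $\alpha>0$, this bounds $\norm{\bu_N(t)}_{H^r}$ by the data, uniformly in $N$ and $t$. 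In particular the ODE cannot blow up, so $T_N=\infty$, and no smallness or Gronwall argument is needed to reach global-in-time bounds.

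Next I would control the time derivative. Writing $\frac{d}{dt}\bu_N=-(I+\alpha^{2r}A^r)^{-1}P_NB(\bu_N,\bu_N)$, the bilinear estimate \eqref{r_52_2r_r} gives $\norm{B(\bu_N,\bu_N)}_{(V^r)'}\le C\norm{\bu_N}_{H^r}^2$, while $(I+\alpha^{2r}A^r)^{-1}$ gains exactly $2r$ derivatives and so maps $(V^r)'\to V^r$ boundedly (with norm $\le\alpha^{-2r}$). Hence $\frac{d}{dt}\bu_N$ is bounded in $L^\infty(0,T;V^r)$ uniformly in $N$. With $\bu_N$ bounded in $L^\infty(0,T;V^r)$ and $\frac{d}{dt}\bu_N$ bounded in $L^\infty(0,T;V^r)$, the Aubin--Lions--Simon lemma (using the compact embedding $V^r\hookrightarrow\hookrightarrow V^{r-\delta}$) extracts a subsequence converging strongly in $C([0,T],V^{r-\delta})$ and weak-$\ast$ in $L^\infty(0,T;V^r)$. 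For $\delta<r-\tfrac56$ the bilinear form remains continuous on $V^{r-\delta}$; testing against $\bv\in\mathcal{V}$ and moving the derivative in $B$ onto $\bv$, the strong convergence of $\bu_N$ passes to the limit in $\int_0^t\duality{B(\bu_N,\bu_N)}{\bv}\,d\tau$, identifying $\bu$ as a solution satisfying \eqref{eqn-strong} in the sense of Definition \ref{def_strong_sol}. Since the limit inherits $\frac{d}{dt}\bu\in L^\infty(0,T;V^r)$, the function $\bu$ is Lipschitz, hence continuous, into $V^r$.

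Uniqueness then follows cleanly from the same Voigt structure. For two solutions $\bu,\bv$ with the same data, $\bw:=\bu-\bv$ satisfies $(I+\alpha^{2r}A^r)\frac{d}{dt}\bw=-B(\bu,\bw)-B(\bw,\bv)$ by bilinearity; pairing with $\bw$ annihilates $\duality{B(\bu,\bw)}{\bw}$ via \eqref{switch} and leaves $\tfrac12\frac{d}{dt}(\norm{\bw}_{L^2}^2+\alpha^{2r}\norm{\bw}_{H^r}^2)=-\duality{B(\bw,\bv)}{\bw}$, which \eqref{r_52_2r_r} bounds by $C\norm{\bv}_{H^r}\norm{\bw}_{H^r}^2$. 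As $\norm{\bv}_{H^r}$ is controlled by the conserved energy, Gronwall forces $\bw\equiv\vect{0}$. I expect the only real obstacle to be the regularity bookkeeping in the nonlinear term: the threshold $r>\tfrac56$ is exactly what renders $B:V^r\times V^r\to(V^r)'$ continuous, and this is precisely compensated by the $2r$-derivative smoothing of $(I+\alpha^{2r}A^r)^{-1}$, so that $\frac{d}{dt}\bu$ returns to $V^r$ and the scheme closes; retaining room $\delta>0$ in the compactness step is what makes the strict inequality necessary.
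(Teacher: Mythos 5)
Your proposal is correct, and the skeleton (Galerkin scheme, exact conservation of the Voigt energy $\|\bu_N\|_{L^2}^2+\alpha^{2r}\|A^{r/2}\bu_N\|_{L^2}^2$ killing the nonlinearity, global extension of the Galerkin solutions, and the Gr\"onwall uniqueness argument via \eqref{switch} and \eqref{r_52_2r_r}) coincides with the paper's. Where you genuinely diverge is the convergence step: the paper does \emph{not} use compactness here; it shows directly that $\{\bu_N\}$ is a Cauchy sequence in $C([0,T],V^r)$ by estimating the difference $\bw_M^N=\bu_N-\bu_M$, paying for this with a case split ($r>\tfrac54$ versus $\tfrac56<r\le\tfrac54$, since \eqref{r_r_r_epsilon} only covers the lower range) and with quantitative tail bounds $\|(I-P_N)\bu_M\|_{H^{r-\epsilon}}\le\lambda_N^{-\epsilon/2}\|\bu_M\|_{H^r}$ from \eqref{ineq_from_Parseval}. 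Your route instead bounds $\tfrac{d}{dt}\bu_N=-(I+\alpha^{2r}A^r)^{-1}P_NB(\bu_N,\bu_N)$ in $L^\infty(0,T;V^r)$ using the $2r$-derivative gain of the resolvent, and extracts a subsequence by Aubin--Lions/Arzel\`a--Ascoli converging strongly only in $C([0,T],V^{r-\delta})$; this avoids the case analysis and the tail estimates entirely (and is in fact the strategy the paper reserves for the viscous case in Section~\ref{sec:NSV_GWP}), but it obliges you to upgrade the limit afterwards, which you correctly do by observing that $\partial_t\bu\in L^\infty(0,T;V^r)$ makes $\bu$ Lipschitz into $V^r$. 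The trade-off is that the paper's Cauchy argument delivers convergence of the whole sequence in the top topology $C([0,T],V^r)$ at once, whereas yours requires subsequence extraction plus the a posteriori regularity bootstrap; both close. Two small bookkeeping points: your appeal to \eqref{r_52_2r_r} for $\|B(\bu,\bu)\|_{(V^r)'}\le C\|\bu\|_{H^r}^2$ is literally valid only for $\tfrac56\le r\le\tfrac32$ as the lemma is stated (for $r>\tfrac32$ one should instead note $H^r$ is an algebra, or use the $H^{1/2}\times H^1\times H^1$ bound as the paper does for $r>\tfrac54$), and neither you nor the paper addresses the $L^2(0,T;V^{(1+s)/2})$ clause in Definition~\ref{def_strong_sol}, which appears to be an artifact of the viscous case, so I do not count it against you.
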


\begin{proof}
Consider a Galerkin approximation to \eqref{fNSV} with $\nu = 0$ (and equivalently \eqref{fNSV_functional}) at level $N$ given by
\begin{align}
    \label{G_fEV_mo}
    (I + \alpha^{2r} A^r) \frac{d}{d t} \bu_N 
    & = 
    - P_N B(\bu_N, \bu_N), 
    \\ 
    \label{G_fEV_IC}
    \bu_N(0) 
    & = 
    P_N\bu^0.
\end{align}
Applying the operator $(I + \alpha^{2r} A^r)\inv$ to \eqref{G_fEV_mo}, we see this gives an ODE in a finite dimensional space of coefficients. Hence, existence and uniqueness of a local solution $\bu_N \in C^1([-T_N, T_N], H_N)$ for some $T_N > 0$ follows from the Picard-Lindol\"of Theorem. Now, we can take a justified inner product with $\bu_N$. After integrating by parts, we obtain
\begin{align*}
    \frac{1}{2}\frac{d}{dt} \lp \norm{\bu_N}_{L^2}^2 + \alpha^{2r}\norm{A^{r/2} \bu_N}_{L^2}^2 \rp = 0.
\end{align*}
Integrating in time then yields
\begin{align}\label{fEVG_time_space_bound}
    \norm{\bu_N(t)}_{L^2}^2 + \alpha^{2r}\norm{A^{r/2} \bu_N(t)}_{L^2}^2 
    &= 
    \norm{\bu^0_N}_{L^2}^2 + \alpha^{2r}\norm{A^{r/2} \bu_N^0}_{L^2}^2.
    \\&\leq\notag
    \norm{\bu^0}_{L^2}^2 + \alpha^{2r}\norm{A^{r/2} \bu^0}_{L^2}^2.
\end{align}
Therefore, $\norm{\bu_N}_{L^\infty(0,T, L^2)}$ is bounded uniformly in $N$. Hence, by continually restarting at time $T_N, 2T_N, 3T_N,$ etc., we may extend the time interval of existence and uniqueness arbitrarily far. Fix arbitrary $T > 0$. We will work on the interval $[0,T]$. (The same argument can be made on $[-T,0]$.) Additionally, we see from \eqref{fEVG_time_space_bound}, that the sequence $\set{\bu_N}$ is bounded in $L^\infty([0,T], V^r)$. 

As in the standard arguments for the 3D Euler equations (see, e.g., \cite{Marchioro_Pulvirenti_1994}), we will proceed to show that $\set{\bu_N}_{N\in \nN}$ is a Cauchy sequence in $C([0,T], V^r)$ for some $T>0$. Let $\bu^0 \in H^r$, and let $\bu_N$ and $\bu_M$ be solutions of \eqref{G_fEV_mo} for $N, M \in \nN$ with $N < M$. Set $\bw^N_M : = \bu_N - \bu_M \in H_M$.  
We will proceed by first handling the simpler case when $r > \frac{5}{4}$. Then we will consider the remaining parameter values, in particular $\frac{5}{6} < r \leq \frac{5}{4}$. We divide our argument into cases because Lemma \ref{lemma:bilinear_estimates} restricts $r \leq \frac{5}{4}$ due to the application of H\"older's inequality. In our arguments, we will use the following standard inequalities which follow from Parseval's identity \eqref{Hs_norm_parseval}: for $\bv \in H^s$ and $\epsilon >0$
\begin{align}\label{ineq_from_Parseval1}
    \norm{(I - P_N)\bv}_{L^2} \leq \lambda_N^{-1/2} \norm{\bv}_{H^1},
\end{align}
and more generally,
\begin{align} \label{ineq_from_Parseval}
    \norm{(I - P_N) \bv}_{H^{s-\epsilon}} \leq \lambda_N^{-\epsilon/2}\norm{\bv}_{H^s}.
\end{align}

First, assume that $ r > \frac{5}{4}$. Subtract the equations \eqref{G_fEV_mo} for $\bu_N$ and $\bu_M$ and take an inner product with $\bw_M^N$. Utilizing \eqref{switch}, Sobolev embedding, and \eqref{ineq_from_Parseval} yields
\begin{align} \label{fEVG_NM_difference1}
    & \quad \frac{1}{2} \frac{d}{dt} \lp \norm{\bw_M^N}_{L^2}^2 + \alpha^{2r} \norm{A^{r/2} \bw_M^N}_{L^2}^2 \rp 
    \\ & \notag = 
    -(B(\bw_M^N, \bu_M), \bw_M^N ) - (B(\bu_N, \bu_N), (I-P_N) \bu_M)
    \\ & \notag \leq
    C\norm{\bw_M^N}_{H^{1/2}}\norm{\bu_M}_{H^1}\norm{\bw_M^N}_{H^1} + C\norm{\bu_N}_{H^{1/2}}\norm{\bu_N}_{H^1}\norm{(I- P_N)\bu_M}_{H^1}
    \\ & \leq \notag 
    K_{\alpha, r}\norm{\bw_M^N}_{H^r}^2 + K_{\alpha, r}^3 \lambda_N^{-\epsilon/2},
\end{align}
where $\epsilon > \frac{1}{4}$. Note that $\norm{\bu_N}_{H^r}\leq K_{\alpha, r}$ uniformly in $N$. Hence, Gr\"onwall's inequality produces
\begin{align}
    &\quad \label{Cauchy}
    \norm{\bw_M^N (t)}_{L^2}^2 + \alpha^{2r} \norm{A^{r/2} \bw_M^N (t)}_{L^2}^2 
    \\&\leq \notag
    e^{C_{\alpha, r} t} \lp \norm{\bw_M^N(0)}_{L^2}^2 + \alpha^{2r}\norm{A^{r/2} \bw_M^N(0)}_{L^2}^2 \rp + e^{C_{\alpha, r} t}C_{\alpha,r}\lambda_N^{-\epsilon/2}t
    \\&\leq \notag
    e^{C_{\alpha, r} T} \lp \norm{\bw_M^N(0)}_{L^2}^2 + \alpha^{2r} \norm{A^{r/2} \bw_M^N(0)}_{L^2}^2 \rp + e^{C_{\alpha, r} T}C_{\alpha,r}\lambda_N^{-\epsilon/2}T.
\end{align}
Because $\bu^0 \in H^r$, it follows that $\norm{\bw_M^N(0)}_{L^2}^2 + \alpha^{2r} \norm{A^{r/2} \bw_M^N(0)}_{L^2}^2 \to 0$ as $M, N \to \infty$. Therefore, $\set{\bu_N}_{N\in \nN}$ is Cauchy in $C([0,T], V^r)$ and so, converges to an element $\bu \in C([0,T], V^r)$. 

Next, assume that $\frac{5}{6} < r \leq \frac{5}{4}$. As we did above, subtract the equations \eqref{G_fEV_mo} for $\bu_N$ and $\bu_M$ and take an inner product with $\bw_M^N$. Now, applying \eqref{switch} yields
\begin{align} \label{fEVG_NM_difference}
    & \quad \frac{1}{2} \frac{d}{dt} \lp \norm{\bw_M^N}_{L^2}^2 + \alpha^{2r} \norm{A^{r/2} \bw_M^N}_{L^2}^2 \rp 
    \\ & \notag = 
    -(B(\bw_M^N, \bu_M), \bw_M^N ) - (B(\bu_N, \bu_N), (I-P_N) \bu_M).
\end{align}
Using \eqref{r_52_2r_r}, \eqref{r_r_r_epsilon}, \eqref{fEVG_time_space_bound}, and \eqref{ineq_from_Parseval}, since $r > \frac{5}{6}$ implies $\frac{5}{2} - 2r < r$,  we see that the right-hand side of  \eqref{fEVG_NM_difference} is bounded above by
\begin{align}
    & \quad C\norm{\bw_M^N}_{H^r}^2 \norm{\bu_M}_{H^{\frac{5}{2} - 2r}} + C\norm{\bu_N}_{H^r} \norm{\bu_N}_{H^r} \norm{(I-P_N)\bu_M}_{H^{r-\epsilon}} 
    \\ & \leq \notag
    C\norm{\bw_M^N}_{H^r}^2 \norm{\bu_M}_{H^r} + C\norm{\bu_N}_{H^r}^2 \norm{\bu_M}_{H^r}\lambda_N^{-\epsilon/2}
    \\ & \leq \notag
    K_{\alpha, r} \norm{\bw_M^N}_{H^r}^2 + K^3_{\alpha, r}\lambda_N^{-\epsilon/2}
    \\ & \leq \notag
    K_{\alpha, r} \lp \norm{\bw_M^N}_{L^2}^2 + \alpha^2\norm{A^{r/2}\bw_M^N}_{L^2}^2 \rp + K^3_{\alpha, r}\lambda_N^{-\epsilon/2},
\end{align}
since $\norm{\bu_N}_{H^r}\leq K_{\alpha, r}$ uniformly in $N$. The rest follows as above.

Now, to show that $\bu$ satisfies \eqref{fNSV_functional} in the sense of $(V^r)'$, we choose $\bv \in \cV$ arbitrarily. As $\bu_N \in C([0,T], \cV)$, we can take a justified inner product of \eqref{G_fEV_mo} with $\bv$ and integrate in time; we obtain
\begin{align}\label{fEV_conv_eq}
    & \quad 
    (\bu_N(t), \bv) + \alpha^{2r}(A^r\bu_N(t), \bv) - (\bu_N(0), \bv) - \alpha^{2r}(A^r\bu_N(0), \bv) 
    \\ & = \notag
    -\int_0^t (B(\bu_N(s), \bu_N(s)), P_N\bv) \, ds. 
\end{align}
Since $\bu_N \to \bu$ strongly in $C([0,T], V^r)$, the sequence $(\bu_N(t), \bv) \to (\bu(t), \bv)$ for all $t \in [0,T]$. Additionally, observe
\begin{align}
    &\quad
    \int_0^t (B(\bu_N(s), \bu_N(s)), P_N\bv) \, ds - \int_0^t \ip{B(\bu(s), \bu(s))}{\bv} \, ds
    \\ & = \notag
    \int_0^t (B(\bu_N(s), \bu_N(s)), P_N\bv - \bv) \, ds + \int_0^t \ip{B(\bu_N(s) - \bu(s), \bu_N(s)))}{\bv} \, ds
    \\ & \qquad + \notag
    \int_0^t \ip{B(\bu(s), \bu_N(s) - \bu(s))}{\bv} \, ds.
\end{align}
Now by \eqref{r_52_2r_r}, we observe that 
\begin{align}
&\quad
    |(B(\bu_N(s), \bu_N(s), P_N\bv - \bv)|
    \\& \leq \notag
    C\norm{\bu_N(s)}_{H^r}\norm{\bu_N(s)}_{H^{\frac{5}{2}-2r}}\norm{P_N\bv - \bv}_{H^r}
    \\ & \leq \notag
    C\norm{\bu_N(s)}_{H^r}^2\norm{P_N\bv - \bv}_{H^r},
\end{align}
for $\frac{5}{6} \leq r \leq \frac{3}{2}$. Similarly,  
\begin{align}
   & \quad 
   |\ip{B(\bu_N(s)-\bu(s), \bu_N(s))}{\bv}|
   \\ & \leq \notag 
   C\norm{\bu_N(s) - \bu(s)}_{H^r}\norm{\bu_N(s)}_{H^r}\norm{\bv}_{H^r},
\end{align}
and 
\begin{align}
    & \quad 
    |\ip{B(\bu(s), \bu_N(s) - \bu(s))}{\bv}|
    \\ & \leq \notag 
    C\norm{\bu_N(s) - \bu(s)}_{H^r}\norm{\bu(s)}_{H^r}\norm{\bv}_{H^r}.
\end{align}
Hence, using the facts that $\norm{\bu_N}_{H^r}$ is bounded independently of $N$, that $\bu_N \to \bu$ strongly in $C([0,T], V^r)$ and that $\norm{P_N\bv - \bv}_{H^r}\to 0$ as $N \to \infty$ (by Parseval's identity), we can conclude that $\int_0^t(B(\bu_N(s), \bu_N(s)), P_N\bv) \, ds \to \int_0^t \ip{B(\bu(s), \bu(s)}{\bv} \, ds$ as $N\to \infty$. Therefore, sending $N \to \infty$ in \eqref{fEV_conv_eq}, we obtain
\begin{align} \label{fEV_existance}
&\quad
    (\bu(t), \bv) + \alpha^{2r}\ip{A^r \bu(t)}{ \bv} - (\bu(0), \bv) - \alpha^{2r}\ip{A^r\bu(0)}{\bv}
    \\ & = \notag
    -\int_0^t \ip{B(\bu(s), \bu(s))}{\bv} \, ds,
\end{align}
for all $\bv \in \cV$. Indeed, as $\cV$ is dense in $V^r$, a standard density argument shows that this equality holds for all $\bv \in V^r$. As $\bu \in C([0,T], V^r)$, it follows from \eqref{r_52_2r_r} that $A^r\bu, B(\bu, \bu) \in C([0,T], (V^r)')$. Hence, \eqref{fEV_existance} shows that \eqref{fNSV_functional} holds in the sense of $(V^r)'$ and $u(0) = u^0$. Therefore, we have proven existence of a strong solution to \eqref{fNSV_functional} in the sense of Definition \ref{def_strong_sol}. Moreover, $\frac{d}{dt}\bu \in C([0,T], V^r)$. To see this, apply $(I + \alpha^{2r} A^r)\inv$ to \eqref{fNSV_functional} and observe that $(I+\alpha^{2r} A^r)\inv B(\bu, \bu) \in C([0,T], V^r)$. Consequently, $\bu \in C^1([0,T], V^r)$. 

To prove the uniqueness and continuous dependence on initial data, consider two solutions of \eqref{fNSV_functional} $\bu_1, \bu_2 \in C([0,T], V^r)$ with initial data $\bu_1^0, \bu_2^0 \in V^r$, respectively. We denote $\widetilde{\bu} := \bu_2 - \bu_1$. Subtracting, we obtain 
\begin{align}\label{fEV_difference}
    (I + \alpha^{2r} A^r)\frac{d}{dt}\widetilde{\bu} = -B(\bu_2, \widetilde{\bu}) - B(\widetilde{\bu}, \bu_1).
\end{align}
Since $\frac{d}{dt}\widetilde{\bu} \in C([0,T], V^r)$, we can take the action of \eqref{fEV_difference} on $\widetilde{\bu}$.
Using the Lions-Magnes Lemma and \eqref{r_52_2r_r}, we obtain
\begin{align}
    \frac{1}{2}\frac{d}{dt} \lp \norm{\widetilde{\bu}}_{L^2}^2 + \alpha^{2r}\norm{A^{r/2} \widetilde{\bu}}_{L^2}^2 \rp 
    & = 
    - (B(\widetilde{\bu}, \bu_1), \widetilde{\bu})
    \\ & \leq \notag
    C\norm{\widetilde{\bu}}_{H^r}^2 \norm{\bu_1}_{H^r}
    \\ & \leq \notag
    K_{\alpha, r}\lp \norm{\widetilde{\bu}}_{L^2}^2 + \alpha^{2r}\norm{A^{r/2} \widetilde{\bu}}_{L^2}^2 \rp,
\end{align}
since $\norm{\bu_1}_{H^r} \leq K_{\alpha, r}$. By Gr\"onwall's inequality, we see that 
\begin{align}
    \norm{\widetilde{\bu}(t)}_{L^2}^2 + \alpha^{2r}\norm{A^{r/2} \widetilde{\bu}(t)}_{L^2}^2 
    \leq 
    e^{TK_{\alpha, r}}\lp \norm{\widetilde{\bu}(0)}_{L^2}^2 + \alpha^{2r}\norm{A^{r/2} \widetilde{\bu}(0)}_{L^2}^2 \rp,
\end{align}
for all $t \in [0,T]$. Hence, solutions depend continuously on initial data in the $L^\infty(0,T; V^r)$-norm. In particular, if $\bu_1^0 = \bu_2^0$, then $\bu_2(t) = \bu_1(t)$ for all $t \in [0, T]$, i.e., solutions are unique.
\end{proof}

 \section{Global Well-Posedness of the fractional Navier-Stokes-Voigt equations}\label{sec:NSV_GWP}
In this section, we prove global well-posedness for the fractional Navier-Stokes equations, \eqref{fNSV} with $\nu >0$. Although this is a similar result to that in Section \ref{sec:EV_GWP}, there are some differences. In particular, we can allow $r \geq \frac{1}{2}$ thanks to the additional smoothness provided by the viscus term. Additionally, the argument for global well-posedness uses compactness rather than a Cauchy sequence. 

Let us recall the definitions of weak and strong solutions for the Navier-Stokes-Voigt equations \eqref{fNSV}.

\begin{definition}\label{fNSV_def_strong_sol}
For $s\geq 0$, let $\bu^0\in H^s$. Suppose $T>0$ and $\nu >0$.
We say that $\bu$ is a weak solution to the fractional Navier-Stokes-Voigt equations \eqref{fNSV}, or equivalently \eqref{fNSV_functional}, on the time interval $[0,T]$ provided that $\bu\in C(0,T, V^s) \cap L^2(0,T, V \cap V^s)$, $\frac{d}{dt}\bu\in L^2(0,T, (V\cap V^s)')$, and $\bu$ satisfies
\begin{align} \label{fNSV_weak_form}
    & (\bu(t), \bv) + \alpha^{2s}(A^{s/2} \bu(t), A^{s/2}\bv) - \int_0^t\nu(A^{1/2} \bu(\tau), A^{1/2}\bv) \, d\tau 
    \\ & = \notag
    \int_0^t \ip{B(\bu(\tau), \bu(\tau))}{ \bv} \, d\tau + (\bu^0, \bv) + \alpha^{2s}(A^{s/2}\bu^0, A^{s/2}\bv),
\end{align}
for all $t$ and all $\bv\in V \cap V^s$. Furthermore, $\bu$ is a strong solution to \eqref{fNSV} if it is a weak solution and additionally, $\frac{d}{dt}\bu \in C(0,T, V \cap V^s)$.
\end{definition}

\begin{theorem}[Global Existence, Uniqueness]
Suppose $\alpha>0$, and $\bu^0\in H^r$ for $r \geq \frac{1}{2}$.  Then there exists a unique solution to \eqref{fNSV} with $\nu >0$ in the sense of Definition \ref{fNSV_def_strong_sol}.
\end{theorem}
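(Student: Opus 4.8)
The plan is to run the Galerkin--compactness scheme, the essential new feature compared with the inviscid Theorem~\ref{thm:EV_existence_uniqueness} being that the dissipative term $\nu A\bu$ supplies an $L^2(0,T;V)$ bound, which lets me replace the Cauchy-sequence argument by compactness and, at the same time, push the admissible exponent down to the borderline $r\ge\frac12$ at which $B$ is still continuous on the energy space (Lemma~\ref{lemma:bilinear_estimates}).

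First I would truncate \eqref{fNSV_functional} by the Galerkin projection, $(I+\alpha^{2r}A^r)\frac{d}{dt}\bu_N+P_NB(\bu_N,\bu_N)+\nu A\bu_N=\vect{0}$ with $\bu_N(0)=P_N\bu^0$, which becomes a finite-dimensional ODE after inverting $\mathcal{M}:=I+\alpha^{2r}A^r$; Picard--Lindel\"of gives local solutions. Testing with $\bu_N$ and using $\ip{B(\bu_N,\bu_N)}{\bu_N}=0$ from \eqref{switch} yields the energy identity $\frac12\frac{d}{dt}\big(\norm{\bu_N}_{L^2}^2+\alpha^{2r}\norm{A^{r/2}\bu_N}_{L^2}^2\big)+\nu\norm{A^{1/2}\bu_N}_{L^2}^2=0$. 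Integrating in time gives a global bound and shows that $\{\bu_N\}$ is bounded in $L^\infty(0,T;V^r)$ and, thanks to the viscous term, in $L^2(0,T;V\cap V^r)$; this $L^2(0,T;V)$ control is exactly what the inviscid case lacked.

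Next I would bound the time derivative, where the Voigt operator is actually an asset: from $\frac{d}{dt}\bu_N=-\mathcal{M}^{-1}\big(P_NB(\bu_N,\bu_N)+\nu A\bu_N\big)$ and the fact that $\mathcal{M}^{-1}$ is a Fourier multiplier bounded (by $1$) on every $V^s$ and its dual, it suffices to bound the right-hand side in $L^2(0,T;(V\cap V^r)')$. Estimate \eqref{r_2_32_r} gives $\norm{B(\bu_N,\bu_N)}_{(V\cap V^r)'}\le C\norm{\bu_N}_{H^r}\norm{\bu_N}_V$, which lies in $L^2_t$ as the product of the $L^\infty_tV^r$ and $L^2_tV$ bounds, while $\nu A\bu_N$ is controlled in $L^2(0,T;(V\cap V^r)')$ by the same $L^2_tV$ bound. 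Hence $\{\frac{d}{dt}\bu_N\}$ is bounded in $L^2(0,T;(V\cap V^r)')$. Since $V\embed H$ compactly, the Aubin--Lions--Simon lemma extracts a subsequence converging strongly in $L^2(0,T;H)$, weak-$\ast$ in $L^\infty(0,T;V^r)$, and weakly in $L^2(0,T;V)$ to a limit $\bu$. I would pass to the limit in \eqref{fNSV_weak_form} by testing against smooth $\bv\in\cV$, for which $\ip{B(\bu_N,\bu_N)}{\bv}=-\ip{B(\bu_N,\bv)}{\bu_N}$ is quadratic in $\bu_N$ against a fixed field, so the strong $L^2(0,T;H)$ convergence passes the nonlinearity, the linear terms pass by weak convergence, and $\norm{P_N\bv-\bv}_{H^r}\to0$ (by \eqref{QNproj}) handles the projection; density of $\cV$ in $V\cap V^r$ then gives a weak solution in the sense of Definition~\ref{fNSV_def_strong_sol}. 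The energy equality for $\bu$ (justified by the Lions--Magenes lemma) makes $t\mapsto\norm{\bu(t)}_{V^r}$ continuous and, with weak continuity, upgrades $\bu$ to $C(0,T;V^r)$; for $r\ge1$ the smoothing of $\mathcal{M}^{-1}$ additionally places $\frac{d}{dt}\bu$ in $C(0,T;V\cap V^r)$, i.e. a strong solution.

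For uniqueness and continuous dependence I would set $\widetilde{\bu}=\bu_2-\bu_1$, test its difference equation with $\widetilde{\bu}$, and use $\ip{B(\bu_2,\widetilde{\bu})}{\widetilde{\bu}}=0$ to reduce matters to controlling $\ip{B(\widetilde{\bu},\bu_1)}{\widetilde{\bu}}$ against $\frac12\frac{d}{dt}\big(\norm{\widetilde{\bu}}_{L^2}^2+\alpha^{2r}\norm{A^{r/2}\widetilde{\bu}}_{L^2}^2\big)+\nu\norm{A^{1/2}\widetilde{\bu}}_{L^2}^2$. For $r\ge\frac56$ estimate \eqref{r_52_2r_r} bounds this term by $C\norm{\widetilde{\bu}}_{H^r}^2\norm{\bu_1}_{H^r}$, and Gr\"onwall closes the estimate exactly as in Theorem~\ref{thm:EV_existence_uniqueness}. \emph{The main obstacle is the low-regularity window $\frac12\le r<\frac56$}, where \eqref{r_52_2r_r} is no longer available: there I would instead use \eqref{r_2_32_r}, interpolate $\norm{\widetilde{\bu}}_{H^{3/2-r}}$ between $H^r$ and $H^1=V$, and apply Young's inequality to absorb the top-order factor into the dissipation $\nu\norm{\widetilde{\bu}}_V^2$, leaving a Gr\"onwall inequality whose coefficient is an $L^1_t$ function of $\norm{\bu_1}_V$. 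This is precisely the step in which the viscosity is indispensable and in which the threshold $r\ge\frac12$ enters.
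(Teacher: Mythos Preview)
Your plan is correct and follows the same Galerkin--compactness--Gr\"onwall scheme as the paper; the only place worth commenting on is the uniqueness step, where the paper avoids your case split entirely. Instead of estimating $\ip{B(\widetilde{\bu},\bu_1)}{\widetilde{\bu}}$ directly, the paper first uses the antisymmetry \eqref{switch} to rewrite it as $-\ip{B(\widetilde{\bu},\widetilde{\bu})}{\bu_2}$ and \emph{then} applies \eqref{r_2_32_r} in that form, obtaining
\[
\abs{\ip{B(\widetilde{\bu},\widetilde{\bu})}{\bu_2}}\le C\norm{\widetilde{\bu}}_{H^r}\norm{\nabla\widetilde{\bu}}_{L^2}\norm{\bu_2}_{H^{3/2-r}}.
\]
A single Young inequality absorbs $\norm{\nabla\widetilde{\bu}}_{L^2}^2$ into the viscosity, and since $\norm{\bu_2}_{H^{3/2-r}}\le\norm{\bu_2}_{H^1}$ for every $r\ge\tfrac12$, one gets a Gr\"onwall coefficient $\norm{\bu_2}_{H^1}^2\in L^1_t$ uniformly in $r$. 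Your interpolation of $\norm{\widetilde{\bu}}_{H^{3/2-r}}$ between $H^r$ and $H^1$ is also valid, but the antisymmetry flip places the gradient on $\widetilde{\bu}$ rather than on $\bu_1$, which is what makes the argument one line instead of two cases. Apart from this (and the paper's slightly sharper bound of $\frac{d}{dt}\bu_N$ in $L^2(0,T;V')$ by exploiting the $2r$-smoothing of $\mathcal{M}^{-1}$ on the test function), the two proofs are essentially identical.
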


\begin{proof}
Let $T>0$ fixed. Consider a finite dimensional Galerkin approximation of \eqref{fNSV_functional} which is based on the basis of eigenfunctions $\set{\phi_k}_{k=1}^\infty$ of the Stokes operator $A$ and is given by the following system of ODEs on $H_N : = \operatorname{span}\set{\phi_k}_{\abs{k} \leq N}$:
\begin{subequations}\label{fNSV_G}
\begin{alignat}{2}
\label{fNSV_G_mo}
(I + \alpha^{2r} A^r)\frac{d}{dt} \bu_N + \nu A\bu_N & = - P_NB(\bu_N, \bu_N),
\\ \label{fNSV_G_IC}
\bu_N(0) & = P_N\bu^0,
\end{alignat}
\end{subequations}
where we use the same notation as in Section \ref{sec:EV_GWP}. Following nearly identical steps to the proof of Theorem \ref{thm:EV_existence_uniqueness} (e.g., using Picard-Linde\"of, etc.), we obtain a Galerkin solution $\bu_N \in C^1([0, \infty), H_N)$. Integrating in time and using \eqref{switch}, we obtain 
\begin{align}\label{fNSVG_time_space_bound}
    & \quad 
    \norm{\bu_N(t)}_{L^2}^2 + \alpha^{2r}\norm{A^{r/2}\bu_N(t)}_{L^2}^2 + 2\nu \int_0^t \norm{\nabla \bu_N(s)}_{L^2}^2 \, ds
    \\ & =  \notag
    \norm{\bu_N^0}_{L^2}^2 + \alpha^{2r}\norm{A^{r/2}\bu_N^0}_{L^2}^2.
\end{align}
Furthermore, we see from \eqref{fNSVG_time_space_bound} that for fixed, arbitrary $T > 0$, $\bu_N$ is bounded in $L^\infty([0,T], V^r) \cap L^2([0,T], V)$, uniformly independent of $N$. 

We aim to extract a subsequence of $\set{\bu_N}$ which converges strongly in $L^2((0,T), V)$ by using the Aubin Compactness Theorem. With this in mind, we must prove $\frac{d}{dt}\bu_N$ is uniformly bounded in $L^2((0,T), V')$ with respect to $N$: Applying $(I+\alpha^{2r} A^r)\inv$ to equation \eqref{fNSV_G_mo}, we obtain
\begin{align}
    \frac{d}{dt}\bu_N + \nu (I+\alpha^{2r} A^r)\inv A \bu_N = -(I+\alpha^{2r} A^r)\inv P_N B(\bu_N, \bu_N).
\end{align}
Taking the action of $\bv \in V$ with the above equality gives 
\begin{align}
    \ip{\frac{d}{dt}\bu_N}{\bv} = -\nu \ip{(I+\alpha^{2r} A^r)\inv  A\bu_N}{\bv} - \ip{(I+\alpha^{2r} A^r)\inv P_NB(\bu_N, \bu_N)}{\bv}.
\end{align}
We estimate
\begin{align}
    \abs{\ip{(I+\alpha^{2r} A^r)\inv \nu A\bu_N}{\bv}} 
    & = 
    \nu \abs{\ip{(I+\alpha^{2r} A^r)\inv A^{1/2} \bu_N}{A^{1/2}\bv}}
    \\ & \leq \notag
    \nu \norm{(I+\alpha^{2r} A^r)\inv A^{1/2} \bu_N}_{L^2} \norm{A^{1/2}\bv}_{L^2}
    \\ & \leq \notag
    \nu \norm{(I+\alpha^{2r} A^r)\inv A^{1/2} \bu_N}_{L^2} \norm{\bv}_V
    \\ & \leq \notag 
    C \nu \norm{\bu_N}_{H^{1-2r}}\norm{\bv}_V
    \\ & \leq \notag 
    C \nu \norm{\bu_N}_{H^1} \norm{\bv}_V.
\end{align}
Now, using \eqref{switch}, \eqref{r_52_2r_r}, and the fact that $(I+\alpha^{2r} A^r)\inv$ is self-adjoint, we obtain
\begin{align}
    \abs{\ip{(I+\alpha^{2r} A^r)\inv P_N B(\bu_N, \bu_N)}{\bv}}
    & = 
    \abs{\ip{P_N B(\bu_N, \bu_N)}{(I+\alpha^{2r} A^r)\inv \bv}}
    \\ & = \notag
    \abs{\ip{P_NB(\bu_N, (I+\alpha^{2r} A^r)\inv\bv)}{\bu_N}}
    \\ & = \notag 
    \abs{\ip{B(\bu_N, (I+\alpha^{2r} A^r)\inv\bv)}{\bu_N}}  
    \\ & \leq \notag
    C_{\alpha, r} \norm{\bu_N}^2_{H^r} \norm{(I+\alpha^{2r} A^r)\inv \bv}_{H^{\frac{5}{2} - 2r}}
    \\ & \leq \notag
    C_{\alpha, r} \norm{\bu_N}^2_{H^r} \norm{ \bv}_{H^{\frac{5}{2} - 4r}}
    \\ & \leq \notag
    C_{\alpha, r} \norm{\bu_N}^2_{H^r} \norm{ \bv}_{H^{1}}.
\end{align}
We note in passing that the last inequality holds for $\frac{3}{8} \leq r \leq \frac{3}{2}$ rather than just $\frac{5}{6} \leq r \leq \frac{3}{2}$ (as stated in \eqref{r_52_2r_r}) since $\bv \in V$. Due to these estimates, we have shown 
\begin{align}
    \abs{\ip{\frac{d}{dt} \bu_N}{\bv}} 
    \leq 
    \nu \norm{\bu_N}_{H^1} \norm{\bv}_V + C_{\alpha, r} \norm{\bu_N}^2_{H^r} \norm{\bv}_{H^1}.
\end{align}
Squaring and integrating in time, we obtain the following
\begin{align*}
    \norm{\frac{d}{dt}\bu_N}_{L^2(0,T; V')} 
    & \leq 
    C \nu \int_0^T \norm{\bu_N}_{H^1}^2 \, dt + C_{\alpha, r} \int_0^T \norm{\bu_N}_{H^r}^4 \, dt 
    \\ & \leq \notag 
    C \nu \norm{\bu_N}_{L^2(V)}^2 + T C_{\alpha, r}\norm{\bu_N}^4_{L^\infty (V^r)}.
\end{align*}
Hence, owing to the above bounds, we have shown the sequence   
\begin{align}\label{fNSVG_time_derivative_bound}
    \set{\frac{d}{dt}\bu_N} \text{ is bounded in } L^2([0,T], V').
\end{align}
Thus, by the Aubin Compactness Theorem (see, e.g., \cite{Constantin_Foias_1988}), there exists a subsequence of $\set{\bu_N}$ (which we relabel as $\set{\bu_N}$ if necessary) and an element $\bu \in C([0,T], H)$ such that 
\begin{align}\label{strong_conv}
    \bu_N \to \bu \ \text{ strongly in } \ L^\infty([0,T], H).
\end{align}
Further, using the Banach-Alaoglu Theorem and the uniform boundedness of $\bu_N$ in $L^\infty([0,T], V^r) \cap L^2([0,T], V)$ we can pass to an additional subsequence if necessary (which again we relabel as $\set{\bu_N}$), to show that, in fact, $\bu \in L^\infty([0,T], V^r) \cap L^2([0,T], V)$, and 
\begin{align}\label{weak_conv}
    &\bu_N \rightharpoonup \bu \ \text{ weakly in } \ L^2([0,T], V),
    \\ & \label{weak_star_conv}
    \bu_N \rightharpoonup \bu \ \text{ weak-$\ast$ in } \ L^\infty([0,T], V^r),
    \\ & \label{time_deriv_conv}
    \frac{d}{dt}\bu_N \rightharpoonup \frac{d}{dt}\bu \text{ weakly in } L^2([0,T], V).
\end{align}
One should note that by passing to additional subsequences,
the limiting objects above coincide; in particular, \eqref{time_deriv_conv} follows by a standard argument involving integrating by parts in time.

Now, we will show $\bu$ satisfies \eqref{fNSV} in the sense of $L^2(0,T; V')$. We choose an arbitrary element $\bv \in \cV$ and \dela{take an inner product with \eqref{fNSV_G_mo}, then integrate in time over $[0,t]$, for some $t \in [0,T]$,} and apply   
\cite[Lemma III.1.2]{Temam_2001_Th_Num} to obtain
\begin{align}\label{fNSV_converge}
    & (\bu_N(t), \bv) + \alpha^{2r}(A^r \bu_N(t), \bv) - (\bu_N(0), \bv) - \alpha^{2r}(A^r\bu_N(0), \bv) - \int_0^t\nu (A^{1/2}\bu_N(s), A^{1/2}\bv) \, ds 
    \\ & = \notag
    \int_0^t (P_N B(\bu_N(s), \bu_N(s)), \bv) \, ds.
\end{align}
We will now show that each of the terms in the above expression converges to the appropriate limit. 
First, thanks to \eqref{weak_conv}
\begin{align}
    -\nu \int_0^t (A^{1/2}\bu_N(s), A^{1/2}\bv) \, ds \to -\nu\int_0^t (A^{1/2}\bu(s), A^{1/2}\bv) \, ds.
\end{align}
Next, we will show the convergence of the bilinear term. Observe that 
\begin{align}
    &\quad
    \int_0^t (B(\bu_N(s), \bu_N(s)), P_N\bv) \, ds - \int_0^t \ip{B(\bu(s), \bu(s))}{\bv} \, ds
    \\ & = \notag
    \int_0^t (B(\bu_N(s), \bu_N(s)), P_N\bv - \bv) \, ds + \int_0^t \ip{B(\bu_N(s) - \bu(s), \bu_N(s))}{\bv} \, ds
    \\ & \qquad + \notag
    \int_0^t \ip{B(\bu(s), \bu_N(s) - \bu(s))}{\bv} \, ds
    \\ & =: \notag 
    I_1(N) + I_2(N) + I_3(N).
\end{align}
Using the facts that $\norm{\bu_N}_{L^2}$ is bounded independently of $N$, that $\norm{P_N\bv - \bv}_{H^1} \to 0$ as $N \to \infty$ by  \eqref{QNproj}, and that \eqref{strong_conv} holds, standard arguments as in the theory of the Navier-Stokes equations yield $I_1(N) \to 0$ and $I_2(N) \to 0$ as $N \to \infty$. For $I_3(N)$, first note that for all $\bu, \bv, \bw \in V$, by H\"older's inequality 
\begin{align}
    \ip{\bu \otimes \bv}{\bw}_{V'} \leq C\norm{\bu}_{L^2} \norm{\bv}_{L^3} \norm{\bw}_{L^6} \leq C \norm{\bu}_{L^2} \norm{\bv}_{H^{\frac{1}{2}}} \norm{\bw}_{H^1}.
\end{align}
Hence, $\bu \otimes \bv \in L^2(0,T; V')$. From this, and \eqref{weak_conv}, it follows that $I_3(N) \to 0$ as $N \to \infty$.
Hence, sending $N \to \infty$ in \eqref{fNSV_G} yields \eqref{fNSV_def_strong_sol} for all $\bv \in \cV$. A standard argument using the density of $\cV$ in $V\cap V^r$ shows that the equality holds for all $\bv \in V\cap V^r$. 
Hence, we have proven global existence of weak solutions according to Definition \ref{fNSV_def_strong_sol}.

We must now show the uniqueness and continuous dependence on initial data. Let $\bu_1, \bu_2 \in C([0,T], V^r) \cap L^2(0,T; V \cap V^r)$ be two solutions of \eqref{fNSV_functional} with initial data $\bu_1^0, \bu_2^0 \in V^r$ respectively. Set $\widetilde{\bu} = \bu_2 - \bu_1$. Subtracting, we obtain 

\begin{align}
    (I + \alpha^{2r} A^r)\frac{d}{dt}\widetilde{\bu} + \nu A\widetilde{\bu} = -B(\bu_1, \widetilde{\bu}) - B(\widetilde{\bu}, \bu_2).
\end{align} 
Applying the Lions–Magenes lemma, i.e., \cite[Lemma III.1.2]{Temam_2001_Th_Num} and using \eqref{r_2_32_r} then yields  
\begin{align}\label{fNSV_unique}
    & \quad \frac{1}{2}\frac{d}{dt} \lp \norm{\widetilde{\bu}}_{L^2}^2 + \alpha^{2r}\norm{A^{r/2}\widetilde{\bu}}_{L^2}^2 \rp + \nu \norm{A^{1/2} \widetilde{\bu}}_{L^2}^2 
    \\ & = \notag 
    (B(\widetilde{\bu}, \widetilde{\bu}), \bu_2)
    \\ & \leq \notag
    C \norm{\widetilde{\bu}}_{H^r} \norm{A^{1/2}\widetilde{\bu}}_{L^2} \norm{\bu_2}_{H^{\frac{3}{2}-r}}
    \\ & \leq \notag
    C_\nu \norm{\widetilde{\bu}}_{H^r}^2\norm{\bu_2}_{H^{\frac{3}{2} -r}}^2 + \frac{\nu}{2}\norm{A^{1/2}\widetilde{\bu}}_{L^2}^2
    \\ & \leq \notag 
    C_\nu \norm{\widetilde{\bu}}_{H^r}^2\norm{\bu_2}_{H^1}^2 + \frac{\nu}{2}\norm{A^{1/2}\widetilde{\bu}}_{L^2}^2,
\end{align}
thanks to \eqref{switch} and \eqref{r_2_32_r}. Applying Gr\"onwall's inequality, we obtain 
\begin{align}\label{fNSV_unique2}
    \norm{\widetilde{\bu}(t)}_{L^2}^2 + \alpha^{2r}\norm{A^{r/2} \widetilde{\bu}(t)}_{L^2}^2 
    \leq 
    e^{TC_{\alpha, r,\nu}}\lp \norm{\widetilde{\bu}(0)}_{L^2}^2 + \alpha^{2r}\norm{A^{1/2} \widetilde{\bu}(0)}_{L^2}^2 \rp,
\end{align}
for all $t \in [0,T]$. Hence, it follows that the solution depends continuously on the initial data in the $L^\infty(0, T; V^r)$-norm,
and by integrating \eqref{fNSV_unique2}, the solution depends continuously on initial data in the $L^2(0,T;V)$-norm. Additionally, if $\bu_1^0 = \bu_2^0$, then $\bu_2(t) = \bu_1(t)$ for all $t \in [0, T]$. 
\end{proof}

 \section{Convergence in the limiting case}\label{sec:Convergence}

Consider the functional formulation of the unforced Navier-Stokes equations
\begin{subequations}\label{NSE_projected}
\begin{alignat}{2}
\label{NSE_projected_mo}
\partial_t \bu + B(\bu, \bu) + \nu A \bu & = \mathbf{0},
\\ \label{NSE_projected_IC}
\bu(0) & = \bu^0.
\end{alignat}
\end{subequations}
and let $\bu_{\alpha, r}$ be a solution of the fractional Navier-Stokes-Voigt equations \eqref{fNSV_functional} with $\bu_{\alpha, r}(0) = \bu_{\alpha, r}^0$.

Let us recall the following definitions and results about the  3D Euler equations, namely \eqref{NSE_projected} with $\nu = 0$ and periodic boundary conditions, based mainly on the exposition from paper \cite{Beale_Kato_Majda_1984} and chapter 2 of \cite{Temam_1976}. 
\begin{definition}\label{def-Euler-local solution}
Assume that $s> \frac{5}{2}$ and $\bu^0 \in H^s$. A local solution to \eqref{NSE_projected} with $\nu = 0$ is function 
\begin{equation}
\label{eqn-Euler-local solution} 
\bu \in C([0,T),H^s) \cap  C^1([0,T);H^{s-1}),
\end{equation}
for some $T \in [0,\infty]:=[0,\infty)\cup\{+\infty\}$, which solves     \eqref{NSE_projected} with $\nu = 0$ in the weak sense.

A local solution $\bu$ satisfying the condition \eqref{eqn-Euler-local solution}  with $T=\infty$ is called a global solution.
A local solution $\bu$ satisfying the condition \eqref{eqn-Euler-local solution} with some 
$T \in [0,\infty]$ is called a local maximal solution if there exists no a local solution 
$\bv \in C([0,\tau),H^s) \cap C^1([0,\tau);H^{s-1})$ with $\tau>T$ such that 
the restriction of $\bv$ to the interval $[0,T)$ is equal to $\bu$.
\end{definition}
\begin{theorem}
\label{thm-Euler-local solution}
Assume that $s>5/2$ and $u_0 \in H^s$ then there exists  a local solution to \eqref{NSE_projected} with $\nu = 0$. 
Moreover, if $R_0>0$, then there exists $T_0=T_0(R_0)>0$ such that 
if $s>5/2$ and  $\bu^0 \in H^s$ with $\norm{ \bu^0 }_{H^s} \leq R_0$, then there exists 
a local solution $\bu \in C([0,T],H^s)  \in C^1([0,T];H^{s-1})$ with $T\geq T_0$. 
\end{theorem}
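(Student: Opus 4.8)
The plan is to prove local existence by the classical energy method, carried out within the same Galerkin framework used in Sections \ref{sec:EV_GWP} and \ref{sec:NSV_GWP}, and then to pass to the limit by compactness. Consider the Galerkin system $\frac{d}{dt}\bu_N = -P_N B(\bu_N,\bu_N)$ with $\bu_N(0) = P_N\bu^0$, which by Picard--Lindel\"of admits a unique local solution $\bu_N \in C^1([0,T_N],H_N)$. The heart of the argument is a uniform-in-$N$ a priori bound in $H^s$ on a time interval $[0,T_0]$ whose length depends only on $R_0 \geq \norm{\bu^0}_{H^s}$, and from which all of the assertions follow.

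To obtain this bound I would pair the Galerkin equation with $A^s\bu_N$. Since $P_N$ and $P_\sigma$ commute with $A^{s/2}$ and leave $A^{s/2}\bu_N$ invariant (it is divergence-free and mean-free), one reduces to estimating $\big(A^{s/2}((\bu_N\cdot\nabla)\bu_N),\,A^{s/2}\bu_N\big)$. Writing $A^{s/2}((\bu_N\cdot\nabla)\bu_N) = (\bu_N\cdot\nabla)A^{s/2}\bu_N + [A^{s/2},\,\bu_N\cdot\nabla]\bu_N$, the first term integrates to zero by incompressibility ($\nabla\cdot\bu_N = 0$), and the commutator is controlled by the Kato--Ponce--Klainerman--Majda estimate
\begin{align*}
\norm{[A^{s/2},\,\bu_N\cdot\nabla]\bu_N}_{L^2} \leq C\,\norm{\nabla\bu_N}_{L^\infty}\,\norm{\bu_N}_{H^s}.
\end{align*}
Combined with the norm equivalence $\norm{A^{s/2}\bu_N}_{L^2}\cong\norm{\bu_N}_{H^s}$ and the Sobolev embedding $H^{s-1}\embed L^\infty$ (valid since $s-1 > \frac{3}{2}$, which gives $\norm{\nabla\bu_N}_{L^\infty}\leq C\norm{\bu_N}_{H^s}$), this yields $\frac{d}{dt}\norm{\bu_N}_{H^s} \leq C\norm{\bu_N}_{H^s}^2$. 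Integrating this Riccati inequality gives $\norm{\bu_N(t)}_{H^s} \leq R_0/(1 - C R_0 t)$, so choosing $T_0 := 1/(2CR_0)$ bounds $\set{\bu_N}$ uniformly in $L^\infty([0,T_0],H^s)$, independently of $N$ and of the specific datum satisfying $\norm{\bu^0}_{H^s}\leq R_0$. This is exactly the claimed dependence $T \geq T_0(R_0)$.

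With the uniform bound in hand, I would extract a limit by compactness. The product (Moser) estimate gives $B(\bu_N,\bu_N)$ bounded in $L^\infty([0,T_0],H^{s-1})$, hence $\frac{d}{dt}\bu_N$ is bounded there as well; the Aubin--Lions lemma then furnishes a subsequence converging strongly in $C([0,T_0],H^{s'})$ for any $s'\in(\tfrac52,s)$ and weak-$\ast$ in $L^\infty([0,T_0],H^s)$. Since $s' > \frac52$, the bilinear map $B$ is continuous on $H^{s'}$, so one passes to the limit in the nonlinear term exactly as in the proof of Theorem \ref{thm:EV_existence_uniqueness}, obtaining a limit $\bu$ solving \eqref{NSE_projected} with $\nu = 0$ in the weak sense, with $\bu\in L^\infty([0,T_0],H^s)\cap C([0,T_0],H^{s'})$.

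It remains to upgrade the regularity to $\bu\in C([0,T_0],H^s)\cap C^1([0,T_0],H^{s-1})$, and this is the step I expect to be the main obstacle. The weak-$\ast$ limit only gives weak continuity $\bu\in C_w([0,T_0],H^s)$ together with the a priori bound on $\sup_t\norm{\bu(t)}_{H^s}$; promoting this to strong continuity at the top regularity $H^s$ is the classical delicate point, since one cannot simply pass to weak limits in the $H^s$ energy identity. The standard remedy, which I would follow, is a Bona--Smith-type argument: mollify the data, use the uniform estimates to show the approximating solutions form a Cauchy sequence in $C([0,T_0],H^s)$ by controlling their differences one derivative lower and interpolating, thereby obtaining strong $H^s$-continuity of the limit. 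Equivalently one establishes $\limsup_{t\to t_0}\norm{\bu(t)}_{H^s}\leq\norm{\bu(t_0)}_{H^s}$ and combines it with weak continuity, which in a Hilbert space forces strong continuity. Once $\bu\in C([0,T_0],H^s)$ is known, the identity $\partial_t\bu = -B(\bu,\bu)$ together with the continuity of $B$ from $H^s\times H^s$ into $H^{s-1}$ gives $\partial_t\bu\in C([0,T_0],H^{s-1})$, i.e.\ $\bu\in C^1([0,T_0],H^{s-1})$, completing the proof.
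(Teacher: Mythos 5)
The paper does not actually prove this theorem: it is stated as a classical result and the proof is deferred to Temam (1976) and the other cited references, whose method is precisely the one you outline. Your argument --- Galerkin approximation, an $H^s$ energy estimate via the commutator (Kato--Ponce) bound giving a Riccati inequality and hence a uniform existence time $T_0(R_0)$, Aubin--Lions compactness to pass to the limit, and a Bona--Smith-type argument to upgrade weak continuity to $\bu\in C([0,T_0],H^s)$ --- is the standard proof and is correct in outline, with the genuinely delicate step (strong continuity in the top norm) correctly identified and the standard remedies accurately described.
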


\noindent For a proof, see \cite{Temam_1976}.  See also \cite{Marsden+Ebin+Fischer_1972,Beale_Kato_Majda_1984,Kozono_Ogawa_Taniuchi_2002,Kozono_Taniuchi_2000}, and the references therein.

We also recall the following definitions of weak and strong solutions of the 3D Navier-Stokes equations (see, e.g., \cite{Constantin_Foias_1988,Temam_2001_Th_Num}). 

\begin{definition}
A weak solution of the 3D Navier-Stokes equations, namely \eqref{NSE_projected} with $\nu >0 $, is a function $\bu \in L^2((0,T), V) \cap C_w([0,T], H)$ satisfying $\frac{d\bu}{dt} \in L^{4/3}_{\operatorname{loc}}((0,T), V')$ and 
\begin{align}
    & \ip{\frac{d\bu}{dt}}{\bv} + \nu(A^{1/2} \bu, A^{1/2} \bv) + (B(\bu, \bu), \bv) = \mathbf{0}, 
    \\ & \bu(0) = \bu^0,
\end{align}
almost everywhere in $t$, for all $\bv \in V$. The space $C_w((0,T), H)$ is all weakly continuous functions in $L^\infty((0,T), H)$. A function $\bu$ is weakly continuous if $(\bu(t), \bv)$ is a continuous function for all $\bv \in H$. 

A strong solution of the 3D Navier-Stokes equations \eqref{NSE_projected} is a weak solution given $\bu^0 \in V$, satisfying $\bu \in C([0,T], V) \cap L^2([0,T], \cD(A))$ and $\frac{d}{dt}\bu \in L^{2}((0,T); H)$ (see, e.g., \cite{Temam_1995_Fun_Anal}). 
\end{definition}

Furthermore, we cite the following result (see, e.g., \cite{robinson_Rodrigo_Sadowski_3DNSE_2016}).

\begin{theorem}
If $\bu$ is a strong solution of the Navier-Stokes equations on $[0,T]$ with $\bu^0 \in V\cap H^m$, then for $m \in \nN \cup \set{0}$,
\begin{align}
    \bu \in L^\infty((0,T), H^m) \cap L^2((0,T), H^{m+1}).
\end{align}
\end{theorem}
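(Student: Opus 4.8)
The plan is to prove the claim by induction on $m$, bootstrapping regularity one derivative at a time and exploiting the viscous dissipation together with the a priori integrability the strong solution already possesses. The base cases are immediate: $m=0$ is the standard energy inequality $\bu\in L^\infty((0,T),L^2)\cap L^2((0,T),H^1)$, valid for any weak (hence strong) solution, while $m=1$ is built into the definition of a strong solution, since $C([0,T],V)\subset L^\infty((0,T),H^1)$ and $L^2([0,T],\cD(A))=L^2((0,T),H^2)$. For the inductive step I assume $\bu\in L^\infty((0,T),H^m)\cap L^2((0,T),H^{m+1})$ with $m\ge1$, together with $\bu^0\in H^{m+1}$, and aim to upgrade to $\bu\in L^\infty((0,T),H^{m+1})\cap L^2((0,T),H^{m+2})$.

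The core estimate is obtained by testing \eqref{NSE_projected} with $A^{m+1}\bu$. Since this identity cannot be justified directly for the limit solution (it presupposes the very regularity being proved), I would carry out the computation on the Galerkin approximations, derive bounds uniform in the truncation parameter, and pass to the limit, identifying the limit with the given strong solution by uniqueness on $[0,T]$. At the Galerkin level one obtains
\begin{align}
\frac{1}{2}\frac{d}{dt}\norm{A^{(m+1)/2}\bu}_{L^2}^2 + \nu\norm{A^{(m+2)/2}\bu}_{L^2}^2 = -(B(\bu,\bu),A^{m+1}\bu),
\end{align}
where the left-hand side is equivalent, by the norm equivalence on mean-free fields and the lower-order energy bounds, to $\tfrac12\frac{d}{dt}\norm{\bu}_{H^{m+1}}^2+\nu\norm{\bu}_{H^{m+2}}^2$.

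The nonlinear term is handled in the spirit of Lemma \ref{lemma:bilinear_estimates}: after using the antisymmetry \eqref{switch} to cancel the genuinely top-order contribution (the term in which the extra derivatives all fall on a single factor), the remaining commutator terms are estimated by H\"older's inequality and the Gagliardo--Nirenberg inequalities. A scaling count shows that the derivatives can always be distributed so that one factor is measured in $H^{m+2}$ (to be absorbed by the dissipation), one in $H^{m+1}$, and the third in $H^{3/2}$, \emph{independently of $m$}; this yields
\begin{align}
|(B(\bu,\bu),A^{m+1}\bu)| \le C\norm{\bu}_{H^{3/2}}\norm{\bu}_{H^{m+1}}\norm{\bu}_{H^{m+2}} \le \frac{\nu}{2}\norm{\bu}_{H^{m+2}}^2 + \frac{C^2}{2\nu}\norm{\bu}_{H^{3/2}}^2\norm{\bu}_{H^{m+1}}^2.
\end{align}
Setting $y(t):=\norm{\bu(t)}_{H^{m+1}}^2$ and $a(t):=\tfrac{C^2}{\nu}\norm{\bu(t)}_{H^{3/2}}^2$ and absorbing the first term into the dissipation produces $y'\le a(t)\,y$. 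The crucial point is that $a\in L^1((0,T))$: if $m\ge2$ then $\norm{\bu}_{H^{3/2}}\le\norm{\bu}_{H^m}$ is bounded in $L^\infty_t$ by the induction hypothesis, while in the delicate case $m=1$ one interpolates $\norm{\bu}_{H^{3/2}}^2\le\norm{\bu}_{H^1}\norm{\bu}_{H^2}$ and combines $\norm{\bu}_{H^1}\in L^\infty_t$ with $\norm{\bu}_{H^2}\in L^2_t$ (the strong-solution bound) to conclude $a\in L^1_t$. Gr\"onwall's inequality then gives $y\in L^\infty((0,T))$, i.e. $\bu\in L^\infty((0,T),H^{m+1})$, and integrating the differential inequality over $[0,T]$ gives $\nu\int_0^T\norm{\bu}_{H^{m+2}}^2\,dt<\infty$, i.e. $\bu\in L^2((0,T),H^{m+2})$, closing the induction.

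I expect the main obstacle to be precisely the closure of the Gr\"onwall argument in the low-regularity steps (most sharply $m=1\to m=2$). A naive bound on the nonlinear term yields a Riccati-type inequality $y'\le Cy^2$, which only controls $\bu$ on a short time interval and cannot reach the full interval $[0,T]$. The resolution is twofold: first, the uniform derivative distribution that always leaves the third factor at the fixed, low order $H^{3/2}$; and second, the exploitation of the a priori $L^2((0,T),H^{m+1})$ integrability (for $m=1$, the defining $L^2H^2$ bound of the strong solution) through interpolation, which turns the offending coefficient into an $L^1_t$ function. It is here that viscosity is indispensable, since absorbing the $H^{m+2}$ factor into $\nu\norm{\bu}_{H^{m+2}}^2$ via Young's inequality is exactly what makes the Gr\"onwall coefficient integrable.
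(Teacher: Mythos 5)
The paper does not prove this statement at all: it is quoted as a known result with a pointer to \cite{robinson_Rodrigo_Sadowski_3DNSE_2016}, so there is no in-paper argument to compare against. Your bootstrap induction --- testing the Galerkin system with $A^{m+1}\bu$, cancelling the top-order term via antisymmetry, absorbing one $H^{m+2}$ factor into the dissipation, and closing Gr\"onwall with the coefficient $\norm{\bu}_{H^{3/2}}^2\leq\norm{\bu}_{H^1}\norm{\bu}_{H^2}\in L^1(0,T)$ --- is precisely the standard proof given in that reference, and it is essentially correct. The only point to state with care is the claim that the third factor can always be measured in exactly $H^{3/2}$: the one distribution that would require the failing endpoint embedding $H^{3/2}\embed L^\infty$ is exactly the term you have already cancelled, and the surviving commutator terms only need the valid critical embedding $H^{1/2}\embed L^3$ (e.g.\ $\norm{\nabla\bu}_{L^3}\norm{D^{m+1}\bu}_{L^6}\norm{D^{m+1}\bu}_{L^2}$) together with interpolation, so the argument closes as you describe.
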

We will show that the solutions to the fractional Euler-Voigt equations (fEV), i.e. \eqref{fNSV_functional} with $\nu = 0$ converge to the Euler equations (E), i.e., \eqref{NSE_projected} with $\nu = 0$. Next, we will show that solutions of the fractional Navier-Stokes-Voigt equations (NSV) converge to solutions of the Navier Stokes equations (NS). Finally, we will see convergence of solutions of fractional Navier-Stokes-Voigt (fNSV) to solutions of Euler. A visualization of this convergence is given in Figure \ref{fig:limiting_diag}; we show the dashed arrows (horizontal and diagonal), and the left solid, vertical arrow.
The convergence represented by right solid, vertical arrow has been proved when the domain is $\nR^n$ by several authors under appropriate assumptions on the initial data, see \cite{Kato_1972} and the references therein. However, in a domain with a boundary, this remains a challenging open problem.

\begin{figure}[H]
    \centering
    \begin{tikzcd}[arrows={-stealth}]
  \text{fNSV} \rar[dashed, red, "\alpha \to 0"] \dar[swap, blue, "\nu \to 0" ]\drar[dashed, red, shift left]\drar[blue, shift right]%
  & \text{NS} \dar[blue, "\nu \to 0"] \\
  \text{fEV} \rar[dashed, swap, red, "\alpha \to 0"]
  & \text{E}
\end{tikzcd}
\caption{Diagram of possible limiting for solutions to fractional Navier-Stokes-Voigt (fNSV), fractional Euler-Voigt (fEV), Navier-Stokes (NS), and Euler (E)}
\label{fig:limiting_diag}
\end{figure}
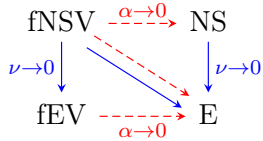

\begin{theorem}[Convergence fEV $\to$ E]\label{convergence_fEV_E}
Let $\nu = 0$, $s > 5/2$, and $r \in \left( \frac{5}{6}, 1\right]$. Given initial data $\bu^0, \bu_{\alpha, r}^0\in H^s$,

let $\bu, \bu_{\alpha, r} \in C([0,T], H^s) \cap C^1([0,T], H^{s-1})$ be the solutions corresponding to \eqref{NSE_projected}  and \eqref{fNSV_functional}, respectively on $[0,T]$ where $0 < T < T_{\operatorname{max}}$, and $T_{\operatorname{max}}$ is the maximal time for which a solution to the Euler equations exists and is unique. Then
\begin{enumerate}[(i)]
    \item There exists $C>0$ for every $t \in [0,T]$ such that
    \begin{align}\label{convergence_estimate}
        & \quad \norm{(\bu - \bu_{\alpha, r})(t)}_{L^2}^2 + \alpha^{2r}\norm{A^{r/2}(\bu - \bu_{\alpha, r})(t)}_{L^2}^2
        \\ & \leq \notag 
        \lp \norm{\bu^0 - \bu_{\alpha, r}^0}_{L^2}^2 
        + 
        \alpha^{2r} \norm{A^{r/2}(\bu^0 - \bu_{\alpha, r}^0)}_{L^2}^2 \rp 
        e^{C_Tt} 
        + 
        CT\alpha^{2r}\norm{A^{r/2}\partial_t \bu}_{L^2(0,t;L^2)}^2 e^{C_Tt}.
   \end{align}
   where $C_T = \frac{C}{T}$.
   \item Consequently, if 
   \begin{align}\label{ur_to_u_fEV}
       \norm{\bu^0 - \bu_{\alpha, r}^0}_{L^2}^2 + \alpha^{2r}\norm{A^{r/2} (\bu^0 - \bu_{\alpha, r}^0)}_{L^2}^2 \to 0 \text{ as } \alpha \to 0
   \end{align}
   (in particular, when $\bu_{\alpha, r}^0 = \bu^0$), then $\bu_{\alpha, r} \to \bu$ in $C([0,T], H^r)$ as $\alpha\to 0$. 
\end{enumerate}
\end{theorem}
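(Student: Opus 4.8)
The plan is to run a weighted energy estimate for the difference $\bw := \bu - \bu_{\alpha, r}$ and to extract convergence by Gr\"onwall's inequality together with an interpolation argument. First I would note that the Euler solution $\bu$ solves the fEV system up to an explicit forcing: rewriting \eqref{NSE_projected} (with $\nu = 0$) as
\[
(I + \alpha^{2r} A^r)\partial_t \bu + B(\bu, \bu) = \alpha^{2r} A^r \partial_t \bu,
\]
subtracting \eqref{fNSV_functional} (with $\nu = 0$), and using the splitting $B(\bu, \bu) - B(\bu_{\alpha, r}, \bu_{\alpha, r}) = B(\bw, \bu) + B(\bu_{\alpha, r}, \bw)$, one finds that $\bw$ satisfies
\[
(I + \alpha^{2r} A^r)\partial_t \bw + B(\bw, \bu) + B(\bu_{\alpha, r}, \bw) = \alpha^{2r} A^r \partial_t \bu.
\]

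Since both $\bu$ and $\bu_{\alpha, r}$ belong to $C([0,T], H^s) \cap C^1([0,T], H^{s-1})$, pairing this identity with $\bw$ is justified and gives
\[
\tfrac12 \tfrac{d}{dt} \lp \norm{\bw}_{L^2}^2 + \alpha^{2r} \norm{A^{r/2} \bw}_{L^2}^2 \rp = -(B(\bw, \bu), \bw) + \alpha^{2r} (A^{r/2} \partial_t \bu, A^{r/2} \bw),
\]
the term $(B(\bu_{\alpha, r}, \bw), \bw)$ vanishing by the antisymmetry \eqref{switch}. The decisive point is to not invoke Lemma \ref{lemma:bilinear_estimates} for the surviving nonlinear term, since that would cost a full $\norm{\bw}_{H^r}^2$, which the $\alpha$-weighted energy cannot absorb; instead I would use the smoothness of the Euler solution. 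As $s > \tfrac52$ gives the embedding $H^s \embed W^{1,\infty}$, H\"older's inequality yields
\[
\abs{(B(\bw, \bu), \bw)} = \abs{\int_\Omega (\bw \cdot \nabla) \bu \cdot \bw \, d\bx} \leq \norm{\nabla \bu}_{L^\infty} \norm{\bw}_{L^2}^2 \leq C \norm{\bu}_{H^s} \norm{\bw}_{L^2}^2,
\]
which charges only the $L^2$ part of the energy. Estimating the forcing term by Cauchy--Schwarz and Young's inequality, absorbing $\tfrac{\alpha^{2r}}{2} \norm{A^{r/2} \bw}_{L^2}^2$ on the left, and writing $E(t) := \norm{\bw(t)}_{L^2}^2 + \alpha^{2r} \norm{A^{r/2} \bw(t)}_{L^2}^2$, I arrive at a differential inequality of the form $\tfrac{d}{dt} E \leq C_T E + \alpha^{2r} \norm{A^{r/2} \partial_t \bu}_{L^2}^2$, and Gr\"onwall's inequality then produces \eqref{convergence_estimate}, proving part (i).

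For part (ii), observe that the forcing integral is finite: since $\bu \in C^1([0,T], H^{s-1})$ with $s - 1 > \tfrac32 \geq r$, we have $\partial_t \bu \in C([0,T], H^r)$, hence $\norm{A^{r/2} \partial_t \bu}_{L^2(0,T; L^2)}^2 < \infty$. Therefore, under hypothesis \eqref{ur_to_u_fEV} both terms on the right-hand side of \eqref{convergence_estimate} tend to $0$ as $\alpha \to 0$, so $E(t) \to 0$ uniformly on $[0,T]$; in particular $\bu_{\alpha, r} \to \bu$ in $C([0,T], L^2)$. Upgrading this to $C([0,T], H^r)$ is the delicate step, because $E$ controls only the \emph{weighted} seminorm $\alpha^{2r} \norm{A^{r/2} \bw}_{L^2}^2$ and so does not by itself force $\norm{A^{r/2} \bw}_{L^2} \to 0$. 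To close the gap I would interpolate,
\[
\norm{\bw}_{H^r} \leq C \norm{\bw}_{L^2}^{1 - r/\rho} \norm{\bw}_{H^\rho}^{r/\rho} \qquad (r < \rho \leq s),
\]
and combine the $C([0,T], L^2)$ convergence just obtained with a uniform-in-$\alpha$ bound $\sup_{[0,T]} \norm{\bu_{\alpha, r}}_{H^\rho} \leq C$.

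The main obstacle is exactly this uniform higher-regularity bound. The natural route is an $H^\rho$ energy estimate on \eqref{fNSV_functional}, in which the Voigt contribution $\alpha^{2r} \norm{A^{(\rho+r)/2} \bu_{\alpha, r}}_{L^2}^2$ sits on the favorable side of the identity and the nonlinearity is handled by a commutator (Kato--Ponce type) estimate; the difficulty is that the surviving term $(A^{\rho/2} B(\bu_{\alpha, r}, \bw), A^{\rho/2} \bw)$ no longer benefits from the antisymmetry \eqref{switch} and must be controlled using only the regularity available uniformly in $\alpha$, which forces $\rho$ to be taken with $\rho + r \leq s$ and relies on the data lying in $H^s$. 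Propagating the resulting Riccati-type bound across the whole interval $[0,T]$ with $T < T_{\operatorname{max}}$ (rather than on a short subinterval) is the crux; here I would feed back the already-established $L^2$ convergence and the boundedness of the limiting Euler solution in $H^s$ to keep the relevant norms under control uniformly in $\alpha$. With this uniform $H^\rho$ bound in hand, the interpolation inequality above gives $\norm{\bu_{\alpha, r} - \bu}_{C([0,T], H^r)} \to 0$, completing part (ii).
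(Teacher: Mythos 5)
Your proof of part (i) is essentially the paper's argument: the same decomposition of the nonlinearity, the same use of the antisymmetry \eqref{switch}, the same $\norm{\nabla\bu}_{L^\infty}$ bound exploiting $H^s\embed W^{1,\infty}$ for $s>\tfrac52$, and the same $T$-weighted Young inequality on the forcing term $\alpha^{2r}(A^{r/2}\partial_t\bu,A^{r/2}\bw)$, followed by Gr\"onwall. That part is correct.

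For part (ii) you have put your finger on a genuine subtlety: the estimate \eqref{convergence_estimate} controls only $\norm{\bw}_{L^2}^2+\alpha^{2r}\norm{A^{r/2}\bw}_{L^2}^2$, and since the weight $\alpha^{2r}$ vanishes in the limit, this yields convergence in $C([0,T],L^2)$ but not, by itself, in the unweighted $H^r$ norm. The paper's own proof does not address this point --- it simply takes the $\limsup$ of \eqref{limit} and asserts the $C([0,T],H^r)$ convergence, which is most charitably read as convergence in the $\alpha$-dependent norm $\bigl(\norm{\cdot}_{L^2}^2+\alpha^{2r}\norm{A^{r/2}\cdot}_{L^2}^2\bigr)^{1/2}$. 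Your proposed repair (interpolating $\norm{\bw}_{H^r}\leq C\norm{\bw}_{L^2}^{1-r/\rho}\norm{\bw}_{H^\rho}^{r/\rho}$ against a uniform-in-$\alpha$ bound on $\sup_{[0,T]}\norm{\bu_{\alpha,r}}_{H^\rho}$ for some $\rho>r$) is the right idea, but as written it is not a proof: the uniform $H^\rho$ bound is precisely the hard part, and the basic energy identity \eqref{fEVG_time_space_bound} gives only $\norm{A^{r/2}\bu_{\alpha,r}(t)}_{L^2}^2\leq\alpha^{-2r}\norm{\bu^0}_{L^2}^2+\norm{A^{r/2}\bu^0}_{L^2}^2$, which degenerates as $\alpha\to0$. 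Establishing $\sup_\alpha\sup_{[0,T]}\norm{\bu_{\alpha,r}}_{H^\rho}<\infty$ on the whole interval $[0,T]$, $T<T_{\operatorname{max}}$, requires a higher-order energy/commutator estimate whose a priori lifespan must be shown to be $\alpha$-independent via a nontrivial continuation argument (compare the higher-order regularity results of Larios--Titi in the case $r=1$); you gesture at this but do not carry it out. So either complete that uniform bound, or weaken the conclusion of (ii) to convergence in $C([0,T],L^2)$ together with $\sup_{[0,T]}\alpha^{2r}\norm{A^{r/2}(\bu-\bu_{\alpha,r})}_{L^2}^2\to0$, which is all that is actually used downstream in Theorem \ref{thm:blow_up_Euler}.
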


\begin{proof}
Let us choose and fix $T \in (0, T_{\ast})$ Subtracting \eqref{fNSV_functional} from \eqref{NSE_projected} yields 
\begin{align}\label{convergence_difference}
    \quad & \partial_t(\bu - \bu_{\alpha, r}) - \alpha^{2r}A^r\partial_t \bu_{\alpha, r} 
    \\ & = \notag
    -B(\bu, \bu) + B(\bu_{\alpha, r}, \bu_{\alpha, r})
    \\ & = \notag
    -B(\bu - \bu_{\alpha, r}, \bu) + B(\bu - \bu_{\alpha, r}, \bu - \bu_{\alpha, r}) - B(\bu, \bu - \bu_{\alpha, r}).
\end{align}
Observe that since $\bu, \bu_{\alpha, r} \in C^1([0,T], H^{s-1})$ for $s \geq 3$, it holds that
\begin{align}\label{convergence_spaces}
    \partial_t \bu, \partial_t\bu_{\alpha, r}, A^r\partial_t\bu_{\alpha, r} \in C([0,T], L^2).
\end{align}
Additionally, as $H^{s-1}$ is an algebra for $s \geq \frac{5}{2}$, the bilinear terms are also in $C([0,T], H^{s-1})$. Hence, we may take a justified inner product of \eqref{convergence_difference} with $\bu - \bu_{\alpha, r}$. Using 
 \eqref{switch}, one finds that 
\begin{align}
    & \quad (\partial_t(\bu - \bu_{\alpha, r}), \bu - \bu_{\alpha, r}) - \alpha^{2r}(A^r\partial_t \bu_{\alpha, r}, \bu- \bu_{\alpha, r}) 
    \\ & = \notag
    -(B(\bu - \bu_{\alpha, r}, \bu), \bu - \bu_{\alpha, r}).
\end{align}
Adding and subtracting $\alpha^{2r}(A^r \partial_t \bu, \bu - \bu_{\alpha, r})$ using the Lions-Magenes Lemma (see \cite[Lemma III.1.2]{Temam_2001_Th_Num})
and then integrating by parts yields
\begin{align}\label{convergence_IBP}
    & \quad \frac{1}{2} \frac{d}{dt}\lp \norm{\bu - \bu_{\alpha, r}}_{L^2}^2 + \alpha^{2r} \norm{A^{r/2}(\bu - \bu_{\alpha, r})}_{L^2}^2 \rp  
    \\ & = \notag
    - \alpha^{2r}(\partial_t \bu, A^r(\bu - \bu_{\alpha, r}) - (B(\bu - \bu_{\alpha, r}, \bu), \bu - \bu_{\alpha, r}).
\end{align}
Now using \eqref{convergence_spaces}, we obtain the following estimate 
\begin{align}
    \alpha^{2r}\abs{(A^{r/2}\partial_t \bu, A^{r/2}(\bu - \bu_{\alpha, r}))}
    & \leq
    \alpha^{2r}\norm{A^{r/2}\partial_t \bu}_{L^2} \norm{A^{r/2}(\bu - \bu_{\alpha, r})}_{L^2}
    \\ & = \notag
    \lp T^{1/2}\alpha^{r}\norm{A^{r/2}\partial_t \bu}_{L^2} \rp \lp T^{-1/2} \alpha^r \norm{A^{r/2}(\bu - \bu_{\alpha, r})}_{L^2} \rp 
    \\ & \leq \notag 
    \frac{T}{2} \alpha^{2r} \norm{A^{r/2}\partial_t \bu}_{L^2}^2 
    + 
    \frac{1}{2T} \alpha^{2r} \norm{A^{r/2}(\bu - \bu_{\alpha,r})}_{L^2}^2.
\end{align}
Because $\bu$ is a regular solution (i.e. $\bu \in H^s$ with $s > 5/2$), we obtain 
\begin{align}\label{fEV_E_bilinear}\notag 
    \abs{(B(\bu - \bu_{\alpha, r}, \bu), \bu - \bu_{\alpha, r})} 
    & \leq 
    \norm{\bu - \bu_{\alpha, r}}_{L^2}^2 \norm{\nabla \bu}_{L^\infty} 
    \\ & \leq \notag
    C\norm{\bu - \bu_{\alpha, r}}_{L^2}^2 \norm{\bu}_{H^s} 
    \\ & \leq \notag 
    \Tilde{C}\norm{\bu - \bu_{\alpha, r}}_{L^2}^2.
\end{align}

Collecting the estimates, \eqref{convergence_IBP} can be bounded above as follows 
\begin{align}
    & \quad \frac{1}{2} \frac{d}{dt}\lp \norm{\bu(t) - \bu_{\alpha, r}(t)}_{L^2}^2 + \alpha^{2r} \norm{A^{r/2}(\bu(t) - \bu_{\alpha, r}(t))}_{L^2}^2 \rp 
    \\ & \leq \notag 
    \frac{C}{2T} \lp \norm{\bu - \bu_{\alpha, r}}_{L^2}^2 
    + 
    \alpha^{2r}\norm{A^{r/2}(\bu - \bu_{\alpha, r})}_{L^2}^2 \rp 
    + 
    \frac{T}{2} \alpha^{2r} \norm{A^{r/2}\partial_t \bu}_{L^2}^2 .
\end{align}
Gr\"onwall's inequality then yields 
\begin{align}\label{limit}
    & \quad \norm{(\bu - \bu_{\alpha, r})(t)}_{L^2}^2 
    + 
    \alpha^{2r} \norm{A^{r/2}(\bu - \bu_{\alpha, r})(t)}_{L^2}^2 
    \\ &\leq \notag 
    \lp \norm{\bu^0 - \bu_{\alpha, r}^0}_{L^2}^2 
    + 
    \alpha^{2r} \norm{A^{r/2}(\bu^0 - \bu_{\alpha, r}^0)}_{L^2}^2 \rp 
    e^{C_Tt} 
    + 
    CT\alpha^{2r}\norm{A^{r/2}\partial_t \bu}_{L^2(0,t;L^2)}^2 e^{C_Tt},
\end{align}
where $C_T = \frac{C}{T}$.
Hence, \eqref{convergence_estimate} is proved. Finally, if $\lim\limits_{\alpha\to 0}\norm{\bu^0 - \bu^0_{\alpha, r}}_{L^2} = 0$ and $\norm{A^{r/2}\bu_{\alpha, r}^0}_{L^2}^2 \leq M < \infty$ for all $\alpha$, taking the $\limsup$ of \eqref{limit} as $\alpha \to 0$ shows that $\bu_{\alpha, r} \to \bu$ in $C([0,T), H^r)$. 
\end{proof}

\begin{theorem}[Convergence fNSV $\to$ NS]\label{fNSV_to_NS}
Let $s > 1$, $\nu >0$, and $r \in \left[ \frac{1}{2}, 1 \right]$. Given initial data $\bu^0, \bu_{\alpha, r}^0\in V\cap H^{r+s}$, let $\bu, \bu_{\alpha, r} \in C([0,T], H^{r+s})\cap L^2((0,T), H^{r+s+1})$ be the solutions corresponding to \eqref{NSE_projected} and \eqref{fNSV_functional}, respectively on $[0,T]$ where $0 < T < T_{\operatorname{max}}$, and $T_{\operatorname{max}}$ is the maximal time for which a strong solution to the Navier-Stokes equations exists and is unique. Then: 
\begin{enumerate}[(i)]
    \item There exists $C>0$ for all $t \in [0,T]$ such that 
    \begin{align}\label{convergence_estimate_fNSV_NSE}
        & \quad \norm{(\bu - \bu_{\alpha, r})(t)}_{L^2}^2 + \alpha^{2r}\norm{A^{r/2}(\bu - \bu_{\alpha, r})(t)}_{L^2}^2
        \\ & \leq \notag 
        \lp \norm{\bu^0 - \bu_{\alpha, r}^0}_{L^2}^2 
        + 
        \alpha^{2r} \norm{A^{r/2}(\bu^0 - \bu_{\alpha, r}^0)}_{L^2}^2 \rp 
        e^{C_Tt} 
        + 
        CT\alpha^{2r}\norm{A^{r/2}\partial_t \bu}_{L^2(0,t;L^2)}^2 e^{C_Tt},
   \end{align}
   where $C_T = \frac{C}{T}$.

   \item Consequently, if 
   \begin{align}\label{ur_to_u_fNSV}
       \norm{\bu^0 - \bu_{\alpha, r}^0}_{L^2}^2 + \alpha^{2r}\norm{A^{r/2} (\bu^0 - \bu_{\alpha, r}^0)}_{L^2}^2 \to 0 \text{ as } \alpha \to 0
   \end{align}
   (in particular, when $\bu_{\alpha, r}^0 = \bu^0$), then $\bu_{\alpha, r} \to \bu$ in $L^2((0,T), H^1)$ as $\alpha\to 0$. 
\end{enumerate}
\end{theorem}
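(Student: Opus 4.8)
The plan is to mirror the energy method of Theorem~\ref{convergence_fEV_E}, the essential new feature being that the viscous dissipation must now be used to control the nonlinearity, since for $s>1$ the Navier--Stokes solution $\bu$ need not satisfy $\nabla\bu\in L^\infty$ and the clean inviscid bound $\abs{(B(\bw,\bu),\bw)}\leq\norm{\nabla\bu}_{L^\infty}\norm{\bw}_{L^2}^2$ is unavailable. Writing $\bw:=\bu-\bu_{\alpha,r}$ and subtracting \eqref{fNSV_functional} from \eqref{NSE_projected}, I would first record
\begin{align*}
\partial_t\bw-\alpha^{2r}A^r\partial_t\bu_{\alpha,r}+\nu A\bw+\lp B(\bw,\bu)+B(\bu,\bw)-B(\bw,\bw)\rp=\mathbf{0},
\end{align*}
and then take the (Lions--Magenes-justified) action on $\bw$. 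By \eqref{switch} the terms $(B(\bu,\bw),\bw)$ and $(B(\bw,\bw),\bw)$ vanish, and adding and subtracting $\alpha^{2r}(A^{r/2}\partial_t\bu,A^{r/2}\bw)$ exactly as in the inviscid case converts $-\alpha^{2r}(A^r\partial_t\bu_{\alpha,r},\bw)$ into $\tfrac{\alpha^{2r}}{2}\frac{d}{dt}\norm{A^{r/2}\bw}_{L^2}^2$ plus a forcing-type term, yielding
\begin{align*}
\tfrac12\frac{d}{dt}\lp\norm{\bw}_{L^2}^2+\alpha^{2r}\norm{A^{r/2}\bw}_{L^2}^2\rp+\nu\norm{A^{1/2}\bw}_{L^2}^2=\alpha^{2r}(A^{r/2}\partial_t\bu,A^{r/2}\bw)-(B(\bw,\bu),\bw).
\end{align*}
The left-hand side now carries the extra, favourable dissipation $\nu\norm{A^{1/2}\bw}_{L^2}^2$ absent in Theorem~\ref{convergence_fEV_E}.

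The forcing-type term I would handle verbatim as in Theorem~\ref{convergence_fEV_E}: writing it as $\lp T^{1/2}\alpha^r\norm{A^{r/2}\partial_t\bu}_{L^2}\rp\lp T^{-1/2}\alpha^r\norm{A^{r/2}\bw}_{L^2}\rp$ and applying Young's inequality produces $\tfrac T2\alpha^{2r}\norm{A^{r/2}\partial_t\bu}_{L^2}^2+\tfrac1{2T}\alpha^{2r}\norm{A^{r/2}\bw}_{L^2}^2$, where the first summand is integrable in $t$ because $\partial_t\bu\in L^2(0,T;H^r)$; this follows from $\partial_t\bu=-\nu A\bu-B(\bu,\bu)$ together with $\bu\in C([0,T],H^{r+s})\cap L^2((0,T),H^{r+s+1})$ and $s>1$ (so $H^{r+s}$ is an algebra and $A\bu,\,B(\bu,\bu)\in L^2(0,T;H^r)$), and, crucially, this quantity is finite and independent of $\alpha$.

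The nonlinear term is the main obstacle and the point where the argument genuinely departs from the uniqueness estimate \eqref{fNSV_unique}. Rewriting $(B(\bw,\bu),\bw)=-(B(\bw,\bw),\bu)$ via \eqref{switch}, applying \eqref{r_2_32_r} with $s=r\in[\tfrac12,1]$, and using $\norm{\bu}_{H^{3/2-r}}\leq\norm{\bu}_{H^1}\leq M$ gives
\begin{align*}
\abs{(B(\bw,\bu),\bw)}\leq CM\,\norm{\bw}_{H^r}\,\norm{A^{1/2}\bw}_{L^2}.
\end{align*}
In a uniqueness proof one simply keeps $\norm{\bw}_{H^r}^2$, but relating it to the energy here costs a factor $\alpha^{-2r}$ (the part $\norm{A^{r/2}\bw}_{L^2}^2$ sits in the energy only with weight $\alpha^{2r}$), which is harmless for fixed $\alpha$ but fatal as $\alpha\to0$. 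To obtain an $\alpha$-\emph{independent} Gr\"onwall constant I would instead interpolate $\norm{A^{r/2}\bw}_{L^2}\leq\norm{\bw}_{L^2}^{1-r}\norm{A^{1/2}\bw}_{L^2}^{r}$ (valid since $0\le r\le1$) and absorb every occurrence of $\norm{A^{1/2}\bw}_{L^2}$ into the dissipation by Young's inequality, leaving
\begin{align*}
\abs{(B(\bw,\bu),\bw)}\leq\tfrac{\nu}{2}\norm{A^{1/2}\bw}_{L^2}^2+C_{\nu,M,r}\norm{\bw}_{L^2}^2,
\end{align*}
with $C_{\nu,M,r}$ independent of $\alpha$ (equivalently, the classical estimate $\abs{(B(\bw,\bu),\bw)}\leq C\norm{\bw}_{L^2}^{1/2}\norm{A^{1/2}\bw}_{L^2}^{3/2}\norm{\nabla\bu}_{L^2}$ gives the same bound).

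Finally I would collect the estimates, absorb $\tfrac{\nu}{2}\norm{A^{1/2}\bw}_{L^2}^2$ into the left-hand side, bound the remaining lower-order contributions by $\tfrac{C_T}{2}\lp\norm{\bw}_{L^2}^2+\alpha^{2r}\norm{A^{r/2}\bw}_{L^2}^2\rp$ with $C_T=C/T$ (the $1/T$ from the Young split, the additive constant $C_{\nu,M,r}$ absorbed using $T<T_{\mathrm{max}}$), and apply Gr\"onwall's inequality to obtain \eqref{convergence_estimate_fNSV_NSE}, proving (i). For (ii), under \eqref{ur_to_u_fNSV} the right-hand side of (i) tends to $0$ as $\alpha\to0$ uniformly on $[0,T]$, since $\alpha^{2r}\to0$ while $\norm{A^{r/2}\partial_t\bu}_{L^2(0,T;L^2)}$ is finite and $\alpha$-independent. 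To upgrade this to the asserted $L^2((0,T),H^1)$ convergence I would integrate the differential inequality in time and retain the dissipation: setting $\mathcal{E}(t):=\norm{\bw(t)}_{L^2}^2+\alpha^{2r}\norm{A^{r/2}\bw(t)}_{L^2}^2$,
\begin{align*}
\tfrac{\nu}{2}\int_0^T\norm{A^{1/2}\bw}_{L^2}^2\,dt\leq\tfrac12\,\mathcal{E}(0)+\tfrac{C_T}{2}\int_0^T\mathcal{E}(t)\,dt+\tfrac{T}{2}\alpha^{2r}\norm{A^{r/2}\partial_t\bu}_{L^2(0,T;L^2)}^2.
\end{align*}
By (i) every term on the right vanishes as $\alpha\to0$, so $\int_0^T\norm{A^{1/2}\bw}_{L^2}^2\,dt\to0$; since $\bw$ is mean-free this controls the full $H^1$ norm via the Poincar\'e inequality, and together with $\norm{\bw}_{L^2}\to0$ from (i) it yields $\bu_{\alpha,r}\to\bu$ in $L^2((0,T),H^1)$.
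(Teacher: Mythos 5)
Your proof is correct and follows the same skeleton as the paper's: subtract the equations, test with $\bw=\bu-\bu_{\alpha,r}$, add and subtract $\alpha^{2r}(A^{r/2}\partial_t\bu,A^{r/2}\bw)$, split the resulting forcing term with the $T^{1/2}$ weights and Young's inequality, and close with Gr\"onwall, retaining the dissipation term to upgrade to $L^2((0,T),H^1)$ convergence in part (ii). The one place you genuinely diverge is the bilinear term. The paper estimates $\abs{(B(\bw,\bu),\bw)}\leq\norm{\bw}_{L^2}^2\norm{\nabla\bu}_{L^\infty}\leq C\norm{\bw}_{L^2}^2\norm{\bu}_{H^{r+2}}$ and treats this as bounded, which yields a Gr\"onwall constant independent of $\nu$ but implicitly uses more regularity than the hypotheses guarantee pointwise in time (with $r=\tfrac12$ and $s$ slightly above $1$, the class $C([0,T],H^{r+s})$ does not control $\norm{\nabla\bu}_{L^\infty}$; one would have to route the bound through $\bu\in L^2((0,T),H^{r+s+1})$ and a time-dependent Gr\"onwall coefficient). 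You instead absorb the nonlinearity into the viscous dissipation, needing only $\bu\in L^\infty(0,T;H^1)$; this is more robust under the stated hypotheses at the price of a $\nu$-dependent constant, which is harmless here since $\nu>0$ is fixed (though it would not transfer to the inviscid-limit theorems). One caveat: your interpolation variant $\norm{A^{r/2}\bw}_{L^2}\leq\norm{\bw}_{L^2}^{1-r}\norm{A^{1/2}\bw}_{L^2}^{r}$ produces, at $r=1$, the term $CM\norm{A^{1/2}\bw}_{L^2}^2$ with no small exponent left over for Young's inequality, so it cannot be absorbed into $\tfrac{\nu}{2}\norm{A^{1/2}\bw}_{L^2}^2$ unless $CM$ happens to be small; the parenthetical ``classical'' estimate $\abs{(B(\bw,\bu),\bw)}\leq C\norm{\bw}_{L^2}^{1/2}\norm{A^{1/2}\bw}_{L^2}^{3/2}\norm{\nabla\bu}_{L^2}$ is therefore not merely equivalent but is the version that actually closes the argument uniformly for all $r\in[\tfrac12,1]$, and you should rely on it.
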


\begin{remark}
In the following proof, we take advantage of the fact that $H^{r+s}$ is an algebra for $r \geq \frac{1}{2}$ and $s>1$ in order to control the bilinear term. 
\end{remark}

For the sake of completeness, we give the full proof of Theorem \ref{fNSV_to_NS} below. However, we note that the proof is very similar to that of Theorem \ref{convergence_fEV_E} with minor difference in the estimates. 

\begin{proof}
Subtracting \eqref{fNSV_functional} from \eqref{NSE_projected} gives 
\begin{align}\label{convergence_difference_fNSV_NSE}
    & \quad \partial_t(\bu - \bu_{\alpha, r}) - \alpha^{2r}A^r\partial_t \bu_{\alpha, r} - \nu A(\bu - \bu_{\alpha, r})
    \\ & = \notag
    -B(\bu, \bu) + B(\bu_{\alpha, r}, \bu_{\alpha, r})
    \\ & = \notag
    -B(\bu - \bu_{\alpha, r}, \bu) + B(\bu - \bu_{\alpha, r}, \bu - \bu_{\alpha, r}) - B(\bu, \bu - \bu_{\alpha, r}).
\end{align}
Let $\bu-\bu_{\alpha, r}$ act on \eqref{convergence_difference_fNSV_NSE}:
\begin{align}
    & \quad ( \partial_t(\bu - \bu_{\alpha, r}), \bu - \bu_{\alpha, r} ) - \alpha^{2r}\langle A^r\partial_t \bu_{\alpha, r}, \bu- \bu_{\alpha, r}\rangle - \nu (A(\bu - \bu_{\alpha, r}), \bu - \bu_{\alpha, r})
    \\ & = \notag
    - ( B(\bu - \bu_{\alpha, r}, \bu), \bu - \bu_{\alpha, r} ).
\end{align}
Adding and subtracting $\alpha^{2r}( A^r \partial_t \bu, \bu - \bu_{\alpha, r} )$, then integrating by parts yields 
\begin{align}\label{convergence_IBP_fNSV_NSE}
    & \quad \frac{1}{2} \frac{d}{dt}\lp \norm{\bu - \bu_{\alpha, r}}_{L^2}^2 + \alpha^{2r} \norm{A^{r/2}(\bu - \bu_{\alpha, r})}_{L^2}^2 \rp + \nu \norm{A^{1/2}(\bu - \bu_{\alpha, r})}_{L^2}^2 
    \\ & = \notag
    - \alpha^{2r} ( A^{r/2}\partial_t \bu, A^{r/2}(\bu - \bu_{\alpha, r}) ) 
    - ( B(\bu - \bu_{\alpha, r}, \bu), \bu - \bu_{\alpha, r} ).
\end{align}
Since $\bu, \bu_{\alpha, r} \in L^\infty((0,T), H^{r+s}) \cap L^2((0,T), H^{r+s+1})$ for $r \geq \frac{1}{2}$ and $s > 1$, it follows that $A\bu \in L^2((0,T), H^r)$ and $\nabla \bu \in L^\infty((0,T), H^r) \cap L^2((0,T), H^{r+1})$. Then because $H^{r+s}$ is an algebra,  $B(\bu, \bu) \in L^2((0,T); H^{r+s})$, and thus, $\partial_t \bu \in L^2((0,T), H^r)$. Using these facts, we obtain the following estimates:
\begin{align}\label{fNSV_to_NS_bilinear_bound}
    \abs{(B(\bu - \bu_{\alpha, r}, \bu), \bu - \bu_{\alpha, r})} 
    & \leq 
    \norm{\bu - \bu_{\alpha, r}}_{L^2}^2 \norm{\nabla \bu}_{L^\infty} 
    \\ & \leq \notag 
    C\norm{\bu - \bu_{\alpha, r}}_{L^2}^2 \norm{\bu}_{H^{r+2}} 
    \\ & \leq \notag 
    \Tilde{C}\norm{\bu - \bu_{\alpha, r}}_{L^2}^2,
\end{align}
and for some $T>0$ (which we utilize to obtain dimensionally correct estimates)
\begin{align}\label{fNSV_to_NS_time_derivative_term_bound}
    \alpha^{2r}\abs{(A^{r/2}\partial_t \bu, A^{r/2}(\bu - \bu_{\alpha, r}))}
    & \leq
    \alpha^{2r}\norm{A^{r/2}\partial_t \bu}_{L^2} \norm{A^{r/2}(\bu - \bu_{\alpha, r})}_{L^2}
    \\ & = \notag
    \lp T^{1/2}\alpha^{r}\norm{A^{r/2}\partial_t \bu}_{L^2} \rp \lp T^{-1/2} \alpha^r \norm{A^{r/2}(\bu - \bu_{\alpha, r})}_{L^2} \rp 
    \\ & \leq \notag 
    \frac{T}{2} \alpha^{2r} \norm{A^{r/2}\partial_t \bu}_{L^2}^2 
    + 
    \frac{1}{2T} \alpha^{2r} \norm{A^{r/2}(\bu - \bu_{\alpha,r})}_{L^2}^2.
\end{align}
Collecting these estimates, \eqref{convergence_IBP_fNSV_NSE} can be bounded above as follows 
\begin{align}\notag 
    & \quad \frac{1}{2} \frac{d}{dt}\lp \norm{\bu - \bu_{\alpha, r}}_{L^2}^2 + \alpha^{2r} \norm{A^{r/2}(\bu - \bu_{\alpha, r})}_{L^2}^2 \rp + \nu \norm{A^{1/2}(\bu - \bu_{\alpha, r})}_{L^2}^2 
    \\ & \leq \notag 
    \frac{C}{2T} \lp \norm{\bu - \bu_{\alpha, r}}_{L^2}^2 
    + 
    \alpha^{2r}\norm{A^{r/2}(\bu - \bu_{\alpha, r})}_{L^2}^2 \rp 
    + 
    \frac{T}{2} \alpha^{2r} \norm{A^{r/2}\partial_t \bu}_{L^2}^2. 
\end{align}
Gr\"onwall's inequality then yields 
\begin{align*}
    & \quad \norm{\bu(t) - \bu_{\alpha, r}(t)}_{L^2}^2 
    + 
    \alpha^{2r} \norm{A^{r/2}(\bu(t) - \bu_{\alpha, r}(t))}_{L^2}^2 
    + 
    2\nu\int_0^t e^{C_T(t-s)}\norm{A^{1/2}(\bu - \bu_{\alpha, r})(s)}_{L^2}^2 \, ds 
    \\ &\leq \notag 
    \lp \norm{\bu^0 - \bu_{\alpha, r}^0}_{L^2}^2 
    + 
    \alpha^{2r} \norm{A^{r/2}(\bu^0 - \bu_{\alpha, r}^0)}_{L^2}^2 \rp 
    e^{C_Tt} 
    + 
    CT\alpha^{2r}\norm{A^{r/2}\partial_t \bu}_{L^2(0,t;L^2)}^2 e^{C_Tt},
\end{align*}
where $C_T = \frac{C}{T}$.
Therefore, \eqref{convergence_estimate_fNSV_NSE} is proved. Finally, if $\lim\limits_{\alpha\to 0}\norm{\bu^0 - \bu_{\alpha, r}}_{L^2} = 0$ and $\norm{A^{r/2}\bu_{\alpha, r}^0}_{L^2}^2 \leq M < \infty$ for all $\alpha$, taking the $\limsup$ as $\alpha \to 0$ shows that $\bu_{\alpha, r} \to \bu$ in $L^2((0,T), H^1) \cap L^\infty(0,T; H^r)$. 
\end{proof}

\begin{theorem}[Convergence fNSV $\to$ E] \label{thm:convergence_fNSV_E}
Let $s \geq 3$, and $r \in \left[ \frac{1}{2}, 1\right)$. Given initial data $\bu^0, \bu_{\alpha, r}^0\in H^s(\nT^3)$, let $\bu, \bu_{\alpha, r} \in C([0,T], H^s(\nT^3)) \cap C^1([0,T], H^{s-1}(\nT^3))$ be the solutions corresponding to \eqref{NSE_projected} with $\nu = 0$ and \eqref{fNSV_functional} with $\nu > 0$, for $0 < T < T_{\operatorname{max}}$, where $T_{\operatorname{max}}$ is the maximal time for which a solution to the Euler equations exists and is unique. Then: 
\begin{enumerate}[(i)]
    \item For all $t \in [0,T]$, 
    \begin{align}\label{convergence_estimate_fNSV_E}
        & \quad \norm{(\bu - \bu_{\alpha, r})(t)}_{L^2}^2 + \alpha^{2r}\norm{A^{r/2}(\bu - \bu_{\alpha, r})(t)}_{L^2}^2
        \\ &\leq \notag 
        \lp \norm{\bu^0 - \bu_{\alpha, r}^0}_{L^2}^2 
        + 
        \alpha^{2r} \norm{A^{r/2}(\bu^0 - \bu_{\alpha, r}^0)}_{L^2}^2 \rp 
        e^{C_Tt} 
        \\ & \qquad \qquad + \notag 
        \lp CT\alpha^{2r}\norm{A^{r/2}\partial_t \bu(t)}_{L^2(0,t;L^2)}^2 + \nu\norm{A^{1/2}\bu(t)}_{L^2(0,t; L^2)}^2 \rp e^{C_Tt},
   \end{align}
   where $C_T = \frac{C}{T}$ for some constant $C$ and $T$ is a fixed time before potential blow-up.
   \item Consequently, if 
   \begin{align}
       \norm{\bu^0 - \bu_{\alpha, r}^0}_{L^2}^2 + \alpha^{2r}\norm{A^{r/2} (\bu^0 - \bu_{\alpha, r}^0)}_{L^2}^2 \to 0 \text{ as } \alpha \to 0
   \end{align}
   (in particular, when $\bu_{\alpha, r}^0 = \bu^0$), then $\bu_{\alpha, r} \to \bu$ in $C([0,T], H^1)$ as $\alpha, \nu \to 0$. 
\end{enumerate}
\end{theorem}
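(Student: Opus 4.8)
The plan is to run the same weighted energy argument used for Theorem~\ref{convergence_fEV_E} and Theorem~\ref{fNSV_to_NS}, applied to the difference $\bw := \bu - \bu_{\alpha, r}$, but now retaining \emph{both} the Voigt term and the viscous term at once. Subtracting \eqref{fNSV_functional} (with $\nu>0$) from \eqref{NSE_projected} (with $\nu=0$) gives
\begin{align*}
\partial_t \bw - \alpha^{2r} A^r \partial_t \bu_{\alpha, r} - \nu A \bu_{\alpha, r} = -B(\bu, \bu) + B(\bu_{\alpha, r}, \bu_{\alpha, r}),
\end{align*}
and I would split the right-hand side as $-B(\bw, \bu) + B(\bw, \bw) - B(\bu, \bw)$. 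Since $\bu, \bu_{\alpha, r} \in C^1([0,T], H^{s-1})$ with $s \geq 3$, every term lies in $C([0,T], L^2)$, so pairing the equation with $\bw$ is justified via the Lions--Magenes lemma \cite[Lemma III.1.2]{Temam_2001_Th_Num}.

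Pairing with $\bw$ and invoking the cancellation \eqref{switch} annihilates $(B(\bw, \bw), \bw)$ and $(B(\bu, \bw), \bw)$, leaving only $-(B(\bw, \bu), \bw)$. The Voigt term is handled exactly as in the two earlier convergence theorems: adding and subtracting $\alpha^{2r}(A^r \partial_t \bu, \bw)$ turns $-\alpha^{2r}(A^r \partial_t \bu_{\alpha, r}, \bw)$ into the energy contribution $\tfrac{\alpha^{2r}}{2}\tfrac{d}{dt}\norm{A^{r/2}\bw}_{L^2}^2$ plus a forcing $-\alpha^{2r}(A^{r/2}\partial_t \bu, A^{r/2}\bw)$. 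The genuinely new feature is the viscous term: because the target equation is inviscid, $\nu A\bu_{\alpha, r}$ does not cancel, so I would write $\bu_{\alpha, r} = \bu - \bw$ to obtain
\begin{align*}
-\nu(A\bu_{\alpha, r}, \bw) = \nu\norm{A^{1/2}\bw}_{L^2}^2 - \nu(A^{1/2}\bu, A^{1/2}\bw).
\end{align*}
The first, nonnegative, term becomes viscous dissipation on the left, while Young's inequality bounds the cross term by $\tfrac{\nu}{2}\norm{A^{1/2}\bu}_{L^2}^2 + \tfrac{\nu}{2}\norm{A^{1/2}\bw}_{L^2}^2$, whose second half is absorbed by that dissipation.

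It then remains to estimate the surviving nonlinearity by $|(B(\bw, \bu), \bw)| \leq \norm{\bw}_{L^2}^2 \norm{\nabla \bu}_{L^\infty} \leq C\norm{\bu}_{H^s}\norm{\bw}_{L^2}^2$, using $s \geq 3 > \tfrac52$ for the Sobolev embedding. Setting $E(t) := \norm{\bw(t)}_{L^2}^2 + \alpha^{2r}\norm{A^{r/2}\bw(t)}_{L^2}^2$ and collecting terms, Gr\"onwall's inequality with integrating factor $e^{C_T t}$ produces \eqref{convergence_estimate_fNSV_E}, the two forcing contributions surfacing as $CT\alpha^{2r}\norm{A^{r/2}\partial_t \bu}_{L^2(0,t;L^2)}^2$ and $\nu\norm{A^{1/2}\bu}_{L^2(0,t;L^2)}^2$. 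For part (ii), the initial-data term vanishes by assumption; the first forcing vanishes because $\partial_t \bu \in C([0,T], H^{s-1}) \embed C([0,T], H^r)$ makes $\norm{A^{r/2}\partial_t\bu}_{L^2}$ bounded while $\alpha^{2r}\to 0$; and the second vanishes because $\bu$ is a smooth Euler solution and $\nu \to 0$. Hence $\sup_{t\in[0,T]}\norm{\bw(t)}_{L^2}\to 0$.

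The hard part will be upgrading this clean $L^2$ convergence to the asserted $C([0,T], H^1)$ convergence. Unlike Theorem~\ref{fNSV_to_NS}, where a \emph{fixed} $\nu>0$ supplies $L^2(0,T;H^1)$ control of $\bw$ through the dissipation integral, here both regularizing mechanisms degenerate simultaneously, so the $\nu\norm{A^{1/2}\bw}_{L^2}^2$ term carries a vanishing coefficient and cannot itself deliver $H^1$ convergence. The resolution I would use is interpolation: since $\bu$ and $\bu_{\alpha, r}$ are bounded in $C([0,T], H^s)$ with $s \geq 3$, one has $\norm{\bw}_{H^1} \leq C\norm{\bw}_{L^2}^{1 - 1/s}\norm{\bw}_{H^s}^{1/s}$, so the uniform higher-order bound together with $\norm{\bw}_{L^2}\to 0$ forces $\sup_{t}\norm{\bw(t)}_{H^1}\to 0$. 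The crux is therefore justifying a bound on $\norm{\bu_{\alpha, r}}_{C([0,T], H^s)}$ that is \emph{uniform in} $(\alpha, \nu)$ on every $[0,T]\subset[0,T_{\operatorname{max}})$; this is a propagation-of-regularity estimate of Beale--Kato--Majda type, governed by $\int_0^T \norm{\nabla \bu_{\alpha, r}}_{L^\infty}\,d\tau$, which stays finite precisely because $T$ is chosen strictly below the Euler existence time.
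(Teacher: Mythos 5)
Your energy argument for part (i) is exactly the paper's proof: same subtraction and splitting of the nonlinearity, the same add-and-subtract of $\nu(A\bu,\bu-\bu_{\alpha,r})$ (your substitution $\bu_{\alpha,r}=\bu-\bw$ is the identical algebra), the same Young absorption of the viscous cross term, the same $\norm{\nabla\bu}_{L^\infty}$ bound on $(B(\bw,\bu),\bw)$, and the same Gr\"onwall step, so part (i) needs no further comment.

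Where you diverge is part (ii), and the divergence is to your credit. The paper simply asserts that taking $\limsup$ as $\alpha,\nu\to0$ gives convergence in $C([0,T],H^1)$, even though the Gr\"onwall estimate only controls $\sup_t\norm{\bw(t)}_{L^2}^2+\alpha^{2r}\sup_t\norm{A^{r/2}\bw(t)}_{L^2}^2$ and a $\nu$-weighted dissipation integral, all of whose $H^1$-type contributions carry vanishing prefactors. You correctly identify this as the hard point and propose interpolation $\norm{\bw}_{H^1}\leq C\norm{\bw}_{L^2}^{1-1/s}\norm{\bw}_{H^s}^{1/s}$ against a uniform-in-$(\alpha,\nu)$ bound on $\norm{\bu_{\alpha,r}}_{C([0,T],H^s)}$. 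That bound, however, is the genuine gap in your write-up: it is not among the theorem's hypotheses (which give $\bu_{\alpha,r}\in C([0,T],H^s)$ for each fixed $\alpha,\nu$, not uniformly), it is proved nowhere in the paper, and a Beale--Kato--Majda propagation argument for the fractional Voigt system uniformly in the regularization parameters is a substantial separate result --- the Voigt energy identity only controls $\norm{\bu_{\alpha,r}}_{L^2}+\alpha^{r}\norm{\bu_{\alpha,r}}_{H^r}$ uniformly, and choosing $T<T_{\operatorname{max}}$ for the \emph{Euler} solution does not by itself control $\int_0^T\norm{\nabla\bu_{\alpha,r}}_{L^\infty}\,d\tau$ without a continuity/bootstrap argument that you would need to supply. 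So: part (i) is complete; part (ii) as you state it honestly records, but does not close, a gap that the published proof papers over.
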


\begin{proof}
Subtracting \eqref{fNSV_functional} from \eqref{NSE_projected} gives 
\begin{align}\label{convergence_difference_fNSV_E}
    & \quad \partial_t(\bu - \bu_{\alpha, r}) - \alpha^{2r}A^r\partial_t \bu_{\alpha, r} - \nu A\bu_{\alpha, r}
    \\ & = \notag
    -B(\bu, \bu) + B(\bu_{\alpha, r}, \bu_{\alpha, r})
    \\ & = \notag
    -B(\bu - \bu_{\alpha, r}, \bu) + B(\bu - \bu_{\alpha, r}, \bu - \bu_{\alpha, r}) - B(\bu, \bu - \bu_{\alpha, r}).
\end{align}
As in the proof of Theorem \ref{convergence_fEV_E}
\begin{align}\notag 
    & \quad \frac{1}{2} \frac{d}{dt}\lp \norm{\bu - \bu_{\alpha, r}}_{L^2}^2 + \alpha^{2r} \norm{A^{r/2}(\bu - \bu_{\alpha, r})}_{L^2}^2 \rp + \nu (A\bu_{\alpha, r}, \bu - \bu_{\alpha, r}) 
    \\ & = \notag
     -\alpha^{2r}(A^r\partial_t\bu, \bu - \bu_{\alpha, r}) - (B(\bu - \bu_{\alpha, r}, \bu), \bu - \bu_{\alpha, r}).
\end{align}
Adding and subtracting $\nu (A\bu, \bu- \bu_{\alpha, r})$, we obtain 
\begin{align}\label{convergence_IBP_fNSV_E}
    & \quad \frac{1}{2} \frac{d}{dt}\lp \norm{\bu - \bu_{\alpha, r}}_{L^2}^2 + \alpha^{2r} \norm{A^{r/2}(\bu - \bu_{\alpha, r})}_{L^2}^2 \rp + \nu \norm{A^{1/2}(\bu - \bu_{\alpha, r})}_{L^2}^2
    \\ & = \notag
     -\alpha^{2r}(A^r\partial_t\bu, \bu - \bu_{\alpha, r}) - (B(\bu - \bu_{\alpha, r}, \bu), \bu - \bu_{\alpha, r}) - \nu(A\bu, \bu - \bu_{\alpha, r}).
\end{align}
Now using \eqref{convergence_spaces} and integration by parts, we obtain the following estimate 
\begin{align}
    \nu\abs{(A\bu, \bu - \bu_{\alpha, r})} 
    & = 
    \nu\abs{(A^{1/2}\bu, A^{1/2}\bu - \bu_{\alpha, r})} 
    \\ & \leq \notag 
    \nu\norm{A^{1/2}\bu}_{L^2}\norm{A^{1/2}(\bu - \bu_{\alpha, r})}_{L^2}
    \\ & \leq \notag 
    \frac{\nu}{2}\norm{A^{1/2}\bu}_{L^2}^2 + \frac{\nu}{2}\norm{A^{1/2}(\bu - \bu_{\alpha, r})}_{L^2}^2.
\end{align}
This, along with \eqref{fNSV_to_NS_bilinear_bound} and \eqref{fNSV_to_NS_time_derivative_term_bound}, we see that \eqref{convergence_IBP_fNSV_E} can be bounded above as follows 
\begin{align}
    & \quad \frac{1}{2}\frac{d}{dt}\lp \norm{\bu - \bu_{\alpha, r}}_{L^2}^2 + \alpha^{2r} \norm{A^{r/2}(\bu - \bu_{\alpha, r})}_{L^2}^2 \rp + \frac{\nu}{2} \norm{A^{1/2}(\bu - \bu_{\alpha, r})}_{L^2}^2 
    \\ & \leq \notag 
    \frac{C}{2T} \lp \norm{\bu - \bu_{\alpha, r}}_{L^2}^2 
    + 
    \alpha^{2r}\norm{A^{r/2}(\bu - \bu_{\alpha, r})}_{L^2}^2 \rp 
    + 
    \frac{T}{2} \alpha^{2r} \norm{A^{r/2}\partial_t \bu}_{L^2}^2 
    + 
    \frac{\nu}{2}\norm{A^{1/2}\bu}_{L^2}^2.
\end{align}
Gr\"onwall's inequality then yields 
\begin{align*}
    & \quad \norm{(\bu - \bu_{\alpha, r})(t)}_{L^2}^2 + \alpha^{2r} \norm{A^{r/2}(\bu - \bu_{\alpha, r})(t)}_{L^2}^2 + \nu\int_0^t\norm{A^{1/2}(\bu - \bu_{\alpha, r})(s)}_{L^2}^2 \, ds 
    \\ &\leq \notag 
    \lp \norm{\bu^0 - \bu_{\alpha, r}^0}_{L^2}^2 
    + 
    \alpha^{2r} \norm{A^{r/2}(\bu^0 - \bu_{\alpha, r}^0)}_{L^2}^2 \rp 
    e^{C_Tt} 
    \\ & \qquad \qquad + \notag 
    \lp CT\alpha^{2r}\norm{A^{r/2}\partial_t \bu(t)}_{L^2(0,t;L^2)}^2 + \nu\norm{A^{1/2}\bu(t)}_{L^2(0,t; L^2)}^2 \rp e^{C_Tt}.
\end{align*}
Hence, \eqref{convergence_estimate_fNSV_E} is proved. Finally, if $\lim\limits_{\alpha\to 0}\norm{\bu^0 - \bu_{\alpha, r}}_{L^2} = 0$ and $\norm{A^{r/2}\bu_{\alpha, r}^0}_{L^2}^2 \leq M < \infty$ for all $\alpha$, taking the $\limsup$ as $\alpha, \nu \to 0$ shows that $\bu_{\alpha, r} \to \bu$ in $C([0,T), H^1)$. 
\end{proof}

\begin{theorem}[Convergence fNSV $\to$ fEV]
Let $r \in \left(\frac{5}{6},1\right]$, $\alpha >0$, $s > \frac{3}{2}$, and $T_\ast>0$ be fixed. Given initial data $\bu^0, \bu_\nu^0\in H^r(\nT^3)$, let $\bu, \bu_{\nu} \in C([0,T_\ast], V^r(\nT^3)) \cap L^2(0,T_\ast, H^{s}(\nT^3))$ be the solutions corresponding to \eqref{fNSV_functional} with $\nu = 0$ and \eqref{fNSV_functional} with $\nu > 0$, respectively. Then: 
\begin{enumerate}[(i)]
    \item For all $t \in [0,T_\ast]$, 
    \begin{align}
        & \quad \norm{\bu(t) - \bu(t)_{\nu}}_{L^2}^2 + \alpha^{2r}\norm{A^{r/2}(\bu(t) - \bu(t)_{\nu})}_{L^2}^2
        \\ &\leq \notag 
        \lp \norm{\bu^0 - \bu_{\nu}^0}_{L^2}^2 
        + 
        \alpha^{2r} \norm{A^{r/2}(\bu^0 - \bu_{\nu}^0)}_{L^2}^2 \rp 
        e^{C_Tt} 
         + 
         \nu\norm{A^{1/2}\bu}_{L^2(0,t; L^2)}^2 e^{C_Tt},
   \end{align}
   where $C_T = \frac{C}{T}$ for some constant $C$ and $T\in [0, T_\ast]$.
   \item Consequently, if 
   \begin{align}
       \norm{\bu^0 - \bu_{\nu}^0}_{L^2}^2 + \alpha^{2r}\norm{A^{r/2} (\bu^0 - \bu_{\nu}^0)}_{L^2}^2 \to 0 \text{ as } \nu \to 0
   \end{align}
   (in particular, when $\bu_{\nu}^0 = \bu^0$), then $\bu_{\nu} \to \bu$ in $C([0,T], H^1)$ as $\nu \to 0$. 
\end{enumerate}
\end{theorem}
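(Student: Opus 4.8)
The plan is to run the same energy method as in the three preceding convergence theorems, adapted to the key structural feature here: both equations carry the \emph{same} Voigt operator $(I+\alpha^{2r}A^r)$, and the viscous term $\nu A\bu_\nu$ appears only in the fractional Navier--Stokes--Voigt copy. I would write $\widetilde{\bu} := \bu - \bu_\nu$, subtract the two instances of \eqref{fNSV_functional}, and decompose the nonlinearity exactly as in Theorem \ref{convergence_fEV_E},
\begin{align*}
    -B(\bu,\bu)+B(\bu_\nu,\bu_\nu) = -B(\widetilde{\bu},\bu) + B(\widetilde{\bu},\widetilde{\bu}) - B(\bu,\widetilde{\bu}).
\end{align*}
The regularity hypotheses $\bu,\bu_\nu\in C([0,T_\ast],V^r)\cap L^2(0,T_\ast,H^s)$ with $s>\tfrac32$ give, via the algebra property of $H^s$ controlling the nonlinear term, enough time-regularity of $\widetilde{\bu}$ to take the action of the difference equation on $\widetilde{\bu}$ and to invoke the Lions--Magenes lemma \cite[Lemma III.1.2]{Temam_2001_Th_Num}. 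A structural remark worth emphasizing is that, unlike in Theorem \ref{convergence_fEV_E}, the two Voigt terms now combine \emph{exactly} into $\tfrac12\tfrac{d}{dt}\bigl(\norm{\widetilde{\bu}}_{L^2}^2+\alpha^{2r}\norm{A^{r/2}\widetilde{\bu}}_{L^2}^2\bigr)$, so no residual $\alpha^{2r}(A^{r/2}\partial_t\bu,\,\cdot\,)$ forcing is generated; this is why the only forcing surviving in estimate (i) is the $\nu$-weighted viscous term.

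For the nonlinear contribution, \eqref{switch} annihilates the two pairings $\langle B(\bu,\widetilde{\bu}),\widetilde{\bu}\rangle$ and $\langle B(\widetilde{\bu},\widetilde{\bu}),\widetilde{\bu}\rangle$, leaving only $-\langle B(\widetilde{\bu},\bu),\widetilde{\bu}\rangle$. I would bound this by \eqref{r_52_2r_r} with $s=r$, namely $C\norm{\widetilde{\bu}}_{H^r}\norm{\bu}_{H^{5/2-2r}}\norm{\widetilde{\bu}}_{H^r}$; the constraint $r>\tfrac56$ is exactly what makes $\norm{\bu}_{H^{5/2-2r}}\le\norm{\bu}_{H^r}$, and the uniform-in-$\nu$ bound $\norm{\bu}_{L^\infty(0,T_\ast;H^r)}\le K$ coming from the fEV energy identity behind \eqref{fEVG_time_space_bound} converts this into $K_{\alpha,r}\bigl(\norm{\widetilde{\bu}}_{L^2}^2+\alpha^{2r}\norm{A^{r/2}\widetilde{\bu}}_{L^2}^2\bigr)$.

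The genuinely new step, which I expect to be the main obstacle, is the viscous term $\nu(A\bu_\nu,\widetilde{\bu})$, since the limit $\bu$ has no parabolic smoothing of its own. The resolution is to write $\bu_\nu=\bu-\widetilde{\bu}$, so that $\nu(A\bu_\nu,\widetilde{\bu})=\nu(A^{1/2}\bu,A^{1/2}\widetilde{\bu})-\nu\norm{A^{1/2}\widetilde{\bu}}_{L^2}^2$: the favorably signed piece $-\nu\norm{A^{1/2}\widetilde{\bu}}_{L^2}^2$ moves to the left as a dissipation, while Young's inequality controls the cross term by $\tfrac{\nu}{2}\norm{A^{1/2}\bu}_{L^2}^2+\tfrac{\nu}{2}\norm{A^{1/2}\widetilde{\bu}}_{L^2}^2$, the latter absorbed into the dissipation. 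This leaves the forcing $\tfrac{\nu}{2}\norm{A^{1/2}\bu}_{L^2}^2$, which is finite in $L^2(0,T_\ast)$ (because $s>\tfrac32\ge1$ yields $A^{1/2}\bu\in L^2(0,T_\ast;L^2)$, independently of $\nu$) and, crucially, carries an explicit factor of $\nu$. Collecting the estimates gives a differential inequality $\tfrac{d}{dt}E\le C_{\alpha,r}E+\nu\norm{A^{1/2}\bu}_{L^2}^2$ for $E(t):=\norm{\widetilde{\bu}}_{L^2}^2+\alpha^{2r}\norm{A^{r/2}\widetilde{\bu}}_{L^2}^2$, and Gr\"onwall's inequality over $[0,t]$ yields (i). For (ii), since $\nu\norm{A^{1/2}\bu}_{L^2(0,t;L^2)}^2\to0$ as $\nu\to0$ and the initial data converge by assumption, the right-hand side tends to $0$ uniformly on $[0,T_\ast]$; as $E(t)$ is equivalent to $\norm{(\bu-\bu_\nu)(t)}_{H^r}^2$ for fixed $\alpha>0$, this establishes the asserted convergence $\bu_\nu\to\bu$.
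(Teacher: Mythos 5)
Your proposal is correct and follows the same overall strategy as the paper, which for this theorem simply subtracts the two copies of \eqref{fNSV_functional} and states that ``the rest of the proof follows similarly to that of Theorem \ref{thm:convergence_fNSV_E}.'' Your treatment of the viscous term is exactly the paper's: splitting $\nu(A\bu_\nu,\widetilde{\bu})$ into a dissipative piece $\nu\norm{A^{1/2}\widetilde{\bu}}_{L^2}^2$ plus a cross term $\nu(A^{1/2}\bu,A^{1/2}\widetilde{\bu})$ handled by Young's inequality is algebraically identical to the paper's device of adding and subtracting $\nu(A\bu,\widetilde{\bu})$. The one place you genuinely depart from the referenced proof is the nonlinear term: Theorem \ref{thm:convergence_fNSV_E} bounds $\abs{(B(\widetilde{\bu},\bu),\widetilde{\bu})}$ by $\norm{\widetilde{\bu}}_{L^2}^2\norm{\nabla\bu}_{L^\infty}$, which relies on $\bu\in H^s$ with $s>\frac{5}{2}$, whereas here the limit solution is only in $C([0,T_\ast],V^r)\cap L^2(0,T_\ast,H^s)$ with $s>\frac{3}{2}$, so $\nabla\bu\in L^\infty$ is not available. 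Your substitute --- estimate \eqref{r_52_2r_r} with the observation that $r>\frac{5}{6}$ gives $\norm{\bu}_{H^{5/2-2r}}\leq\norm{\bu}_{H^r}$, exactly as in the uniqueness argument of Theorem \ref{thm:EV_existence_uniqueness} --- is the correct adaptation to the stated hypotheses and explains why the theorem requires $r>\frac{5}{6}$; in this respect your write-up is actually more faithful to the theorem's assumptions than a literal transplant of the cited proof would be. Your structural remark that the matching Voigt operators leave no residual $\alpha^{2r}(A^{r/2}\partial_t\bu,\cdot)$ forcing, so that only the $\nu$-weighted term survives in (i), is also correct and accounts for the difference between this estimate and \eqref{convergence_estimate}--\eqref{convergence_estimate_fNSV_E}.
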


\begin{proof}
Subtracting the fNSV equations from the fEV equations gives 
\begin{align}\notag 
    & \partial_t(\bu - \bu_\nu) - \alpha^{2r}A^r\partial_t(\bu - \bu_\nu) - \nu A\bu_\nu 
    \\ & = \notag 
    -B(\bu - \bu_{\nu}, \bu) + B(\bu - \bu_{\nu}, \bu - \bu_{\nu}) - B(\bu, \bu - \bu_{\nu}).
\end{align}
The rest of the proof follows similarly to that of Theorem \ref{thm:convergence_fNSV_E}.
\end{proof}

 \section{Blow-up Criteria}\label{sec:Blow_up}
 In this section, we analyze singularity formation in the Euler and Navier-Stokes equations. For the Euler equations, the formulation of a singularity can be characterized by the vorticity of the fluid becoming unbounded. In particular, it was proved in \cite{Beale_Kato_Majda_1984} that a singularity develops on the interval $[0,T^{**}]$ if and only if for every $N \in \nN$, there exists $t \in [0, T^{\ast \ast}]$ such that $\norm{\bomega}_{L^1([0,t]; L^\infty)}>N$,
where $\bomega(t) := \nabla \times \bu(t)$. Theorem \ref{thm:blow_up_Euler} was proved in the $r = 1$ case in \cite{Larios_Titi_2009} following the original ideas in \cite{Khouider_Titi_2008}.

\begin{theorem}[Blow-up Criterion for Euler]\label{thm:blow_up_Euler}
Assume $\bu^0 \in H^s$ for some $s \geq 3$. Suppose there exists a finite time $T^{**}>0$ such that solutions $\bu_{\alpha, r}$ of \eqref{fNSV_functional} with $\nu = 0$ with $\bu_{\alpha, r}^0 = \bu^0$ for each $\alpha > 0$ and $r \in \left( \frac{5}{6},1 \right]$ satisfy 
\begin{align}
    \sup\limits_{t \in [0, T^{**})}\limsup\limits_{\alpha \to 0} \alpha^{2r}\norm{A^{r/2} \bu_{\alpha, r}(t)}_{L^2}^2 > 0.
\end{align}
Then the Euler equations \eqref{NSE_projected} with $\nu = 0$, with initial data $\bu^0$, develop a singularity in the interval $[0,T^{**}]$. 
\end{theorem}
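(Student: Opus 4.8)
The plan is to argue by contraposition: I assume the Euler equations do \emph{not} develop a singularity on $[0,T^{**}]$ and show that this forces the quantity in the hypothesis to vanish, contradicting its assumed positivity; hence a singularity must form.

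First I would invoke the Beale--Kato--Majda characterization recalled just above the statement: the absence of a singularity on $[0,T^{**}]$ means $\norm{\bomega}_{L^1([0,T^{**}];L^\infty)}<\infty$, so the unique regular Euler solution $\bu$ issuing from $\bu^0\in H^s$ persists with $T_{\operatorname{max}}>T^{**}$ and satisfies $\bu\in C([0,T^{**}],H^s)\cap C^1([0,T^{**}],H^{s-1})$. In particular $\partial_t\bu\in C([0,T^{**}],H^{s-1})$, and since $s\geq 3$ gives $s-1\geq 2\geq r$, the constant $M:=\norm{A^{r/2}\partial_t\bu}_{L^2(0,T^{**};L^2)}^2$ is finite and $\sup_{[0,T^{**}]}\norm{\bu}_{H^r}<\infty$.

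Since $T^{**}<T_{\operatorname{max}}$ and the constraints $s>\tfrac{5}{2}$, $r\in(\tfrac{5}{6},1]$ hold, I would apply Theorem \ref{convergence_fEV_E} on $[0,T^{**}]$ with the common datum $\bu_{\alpha,r}^0=\bu^0$. The data term in \eqref{convergence_estimate} then vanishes, leaving
\[
\alpha^{2r}\norm{A^{r/2}(\bu-\bu_{\alpha,r})(t)}_{L^2}^2 \leq C\,T^{**}\,\alpha^{2r}\,M\,e^{C_{T^{**}}t}.
\]
By the triangle inequality,
\[
\alpha^{2r}\norm{A^{r/2}\bu_{\alpha,r}(t)}_{L^2}^2 \leq 2\alpha^{2r}\norm{A^{r/2}(\bu-\bu_{\alpha,r})(t)}_{L^2}^2 + 2\alpha^{2r}\norm{A^{r/2}\bu(t)}_{L^2}^2,
\]
where the first summand is $O(\alpha^{2r})$ by the previous display and the second is at most $2\alpha^{2r}\sup_{[0,T^{**}]}\norm{\bu}_{H^r}^2$. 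Both tend to $0$ as $\alpha\to0$ for every fixed $t\in[0,T^{**})$, so $\limsup_{\alpha\to0}\alpha^{2r}\norm{A^{r/2}\bu_{\alpha,r}(t)}_{L^2}^2=0$ for each such $t$. Taking the supremum over $t\in[0,T^{**})$ yields $0$, which contradicts the hypothesis; therefore the Euler equations develop a singularity on $[0,T^{**}]$.

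The argument is structurally routine once the convergence machinery of Theorem \ref{convergence_fEV_E} is in hand, and I do not anticipate a serious obstacle. The only point requiring care is the regularity bookkeeping that makes $\alpha^{2r}\norm{A^{r/2}\partial_t\bu}_{L^2(0,t;L^2)}^2$ genuinely $o(1)$: this is exactly where the hypotheses $s\geq 3$ and $r\leq 1$ enter, guaranteeing that $\partial_t\bu$ lives in a space whose norm dominates $\norm{A^{r/2}\cdot}_{L^2}$ uniformly on $[0,T^{**}]$. Conceptually, the content is that Voigt-to-Euler convergence can hold only while the Euler solution remains regular, so the persistence of the extra Voigt energy $\alpha^{2r}\norm{A^{r/2}\bu_{\alpha,r}}_{L^2}^2$ in the limit $\alpha\to0$ is precisely an obstruction to that regularity.
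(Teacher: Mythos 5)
Your proof is correct, and it reaches the contradiction by a genuinely different mechanism than the paper's. The paper does not invoke the quantitative estimate \eqref{convergence_estimate} at all: under the contradiction hypothesis it integrates the exact energy balance of the fractional Euler--Voigt system to obtain \eqref{modified_energy_equality}, passes to the limit $\alpha\to0$ using only the qualitative $L^2$-convergence $\norm{\bu_{\alpha,r}(t)}_{L^2}\to\norm{\bu(t)}_{L^2}$ from part (ii) of Theorem \ref{convergence_fEV_E} together with the limsup-additivity result (Proposition \ref{additivity} in the appendix), and then uses conservation of energy for the smooth Euler solution, $\norm{\bu(t)}_{L^2}=\norm{\bu^0}_{L^2}$, to force $\limsup_{\alpha\to0}\alpha^{2r}\norm{A^{r/2}\bu_{\alpha,r}(t)}_{L^2}^2=0$. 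You instead exploit the full strength of part (i), namely the $\alpha^{2r}\norm{A^{r/2}(\bu-\bu_{\alpha,r})(t)}_{L^2}^2$ component of \eqref{convergence_estimate} with coinciding data, and finish with a triangle inequality and the uniform $H^r$ bound on the regular Euler solution. Your route is more economical in that it dispenses with the Voigt energy identity, with Euler energy conservation, and with the appendix lemma; what it costs is that it genuinely needs the rate $O(\alpha^{2r})$ and the finiteness of $\norm{A^{r/2}\partial_t\bu}_{L^2(0,T^{**};L^2)}$, whereas the paper's argument is ``softer,'' requiring only $L^2$-convergence of $\bu_{\alpha,r}$ to $\bu$. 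Both arguments rest equally on Theorem \ref{convergence_fEV_E} being applicable to the Voigt solutions at the stated regularity, so neither is more exposed on that point. Your regularity bookkeeping ($s\geq 3$ giving $s-1\geq 2\geq r$, hence $\partial_t\bu\in C([0,T^{**}],H^r)$ and $\sup_{[0,T^{**}]}\norm{\bu}_{H^r}<\infty$) is exactly what is needed, and the passage from ``$\limsup$ vanishes for each $t$'' to ``the supremum over $t$ of the $\limsup$ vanishes'' is sound.
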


\begin{proof}
By way of contradiction, assume the solution $\bu$ of the Euler equations is regular on $[0,T^{**}]$. \dela{Taking an inner product of \eqref{fNSV_functional} (with $\nu = 0$) with $\bu_{\alpha, r}$ and integrating by parts yields } 
By applying the Lions–Magenes lemma (\cite[Lemma III.1.2]{Temam_2001_Th_Num}) we infer that
\begin{align}\notag
    \frac{1}{2}\frac{d}{dt} \lp \norm{\bu_{\alpha, r}}_{L^2}^2 + \alpha^{2r}\norm{A^{r/2}\bu_{\alpha, r}}_{L^2}^2 \rp = 0.
\end{align}
Integrating this equality gives 
\begin{align}\label{modified_energy_equality}
    \norm{\bu_{\alpha, r}(t)}_{L^2}^2 + \alpha^{2r}\norm{A^{r/2}\bu_{\alpha, r}(t)}_{L^2}^2 = \norm{\bu^0}_{L^2}^2 + \alpha^{2r}\norm{A^{r/2}\bu^0}_{L^2}^2,
\end{align}
for all $t \geq 0$. Now, taking a $\limsup$ of \eqref{modified_energy_equality} (and noting that since $\lim_{\alpha\rightarrow0}\norm{\bu_{\alpha, r}(t)}_{L^2}^2$ exists, the $\limsup$ can be applied additively; see Proposition \ref{additivity} the appendix) we obtain 
\begin{align}\label{contradiction}
    \norm{\bu(t)}_{L^2}^2 + \limsup\limits_{\alpha \to 0} \alpha^{2r} \norm{A^{r/2}\bu_{\alpha, r}(t)}_{L^2}^2 = \norm{\bu^0}_{L^2}^2,
\end{align}
for all $t \in [0,T^{**}]$ since $\norm{\bu_{\alpha, r}(t)}_{L^2} \to \norm{\bu(t)}_{L^2}$ for $t \in [0,T^{**}]$ by \eqref{ur_to_u_fEV}. But this is to say that 
\begin{align}
    \limsup\limits_{\alpha \to 0}\alpha^{2r}\norm{A^{r/2}\bu_{\alpha, r}(t)}_{L^2}^2 = 0,
\end{align}
since smooth solutions of the Euler equation conserve energy, i.e. $\norm{\bu(t)}_{L^2} = \norm{\bu^0}_{L^2}$ for $t \in [0,T^{**}]$. Thus, $\bu$ must develop a singularity on $[0,T^{**}]$. 
\end{proof}

\begin{theorem}[Blow-up Criterion for Navier-Stokes]
\label{thm:blow_up_NSE}
Assume $\bu^0 \in H^{r+s}$ for some $s > 1$. Suppose there exists a finite time $T^{**}>0$ such that solutions $\bu_{\alpha, r}$ of \eqref{fNSV_functional} with $\nu > 0$ with $\bu_{\alpha, r}^0 = \bu^0$ for each $\alpha > 0$ and $r \in \left[ \frac{1}{2},1\right]$ satisfy 
\begin{align}
    \sup\limits_{t \in [0, T^{**})}\limsup\limits_{\alpha \to 0} \alpha^{2r}\norm{A^{r/2} \bu_{\alpha, r}(t)}_{L^2}^2 > 0.
\end{align}
Then the Navier-Stokes Equations with initial data in $\bu^0$ develop a singularity on the time interval $[0,T^{**}]$. 
\end{theorem}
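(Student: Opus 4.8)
The plan is to argue by contradiction, following the template of the Euler blow-up criterion (Theorem~\ref{thm:blow_up_Euler}), but now carrying along the viscous dissipation term and replacing the convergence fEV~$\to$~E by the convergence fNSV~$\to$~NS supplied by Theorem~\ref{fNSV_to_NS}. Suppose, for contradiction, that the Navier-Stokes solution $\bu$ with data $\bu^0\in H^{r+s}$ remains a strong (regular) solution on all of $[0,T^{**}]$. Then $T_{\operatorname{max}}>T^{**}$, so the hypotheses of Theorem~\ref{fNSV_to_NS} are met on $[0,T^{**}]$ with $\bu_{\alpha,r}^0=\bu^0$, and in particular $\bu$ enjoys the regularity $\partial_t\bu\in L^2((0,T^{**}),H^r)$ used below.

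First I would record the exact energy balance for the fNSV solution. Testing \eqref{fNSV_functional} against $\bu_{\alpha,r}$---justified by the Lions--Magenes lemma \cite[Lemma III.1.2]{Temam_2001_Th_Num} thanks to the smoothness coming from $\bu^0\in H^{r+s}$---and using the cancellation $\duality{B(\bu_{\alpha,r},\bu_{\alpha,r})}{\bu_{\alpha,r}}=0$ from \eqref{switch}, integration in time yields
\begin{align}\label{blowup_NSE_energy}
    \norm{\bu_{\alpha,r}(t)}_{L^2}^2 + \alpha^{2r}\norm{A^{r/2}\bu_{\alpha,r}(t)}_{L^2}^2 + 2\nu\int_0^t \norm{A^{1/2}\bu_{\alpha,r}(s)}_{L^2}^2 \, ds = \norm{\bu^0}_{L^2}^2 + \alpha^{2r}\norm{A^{r/2}\bu^0}_{L^2}^2.
\end{align}

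Next I would pass to the limit $\alpha\to0$ termwise in \eqref{blowup_NSE_energy}. By the convergence estimate \eqref{convergence_estimate_fNSV_NSE} with $\bu_{\alpha,r}^0=\bu^0$, the initial-data contribution vanishes and $\alpha^{2r}\norm{A^{r/2}\partial_t\bu}_{L^2(0,t;L^2)}^2\to0$, so $\norm{\bu_{\alpha,r}(t)}_{L^2}^2\to\norm{\bu(t)}_{L^2}^2$ for every $t$ and $\int_0^t\norm{A^{1/2}(\bu-\bu_{\alpha,r})(s)}_{L^2}^2\,ds\to0$, whence $\int_0^t\norm{A^{1/2}\bu_{\alpha,r}}_{L^2}^2\to\int_0^t\norm{A^{1/2}\bu}_{L^2}^2$; the right-hand side of \eqref{blowup_NSE_energy} tends to $\norm{\bu^0}_{L^2}^2$. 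Since the $L^2$-norm term and the dissipation term both genuinely converge, I would distribute $\limsup_{\alpha\to0}$ across \eqref{blowup_NSE_energy} via the additivity of $\limsup$ (Proposition~\ref{additivity}), obtaining
\begin{align}\label{blowup_NSE_limsup}
    \norm{\bu(t)}_{L^2}^2 + \limsup_{\alpha\to0}\alpha^{2r}\norm{A^{r/2}\bu_{\alpha,r}(t)}_{L^2}^2 + 2\nu\int_0^t \norm{A^{1/2}\bu(s)}_{L^2}^2\,ds = \norm{\bu^0}_{L^2}^2.
\end{align}
Since $\bu$ is a strong Navier-Stokes solution it satisfies the energy equality $\norm{\bu(t)}_{L^2}^2+2\nu\int_0^t\norm{A^{1/2}\bu(s)}_{L^2}^2\,ds=\norm{\bu^0}_{L^2}^2$; subtracting this from \eqref{blowup_NSE_limsup} forces $\limsup_{\alpha\to0}\alpha^{2r}\norm{A^{r/2}\bu_{\alpha,r}(t)}_{L^2}^2=0$ for every $t\in[0,T^{**}]$, hence $\sup_{t}\limsup_{\alpha\to0}\alpha^{2r}\norm{A^{r/2}\bu_{\alpha,r}(t)}_{L^2}^2=0$, contradicting the standing hypothesis. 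Therefore $\bu$ cannot remain regular throughout $[0,T^{**}]$ and must develop a singularity there.

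The step I expect to be the main obstacle is justifying the passage to the limit of the viscous dissipation integral $2\nu\int_0^t\norm{A^{1/2}\bu_{\alpha,r}}_{L^2}^2$, which is the genuinely new feature compared with the inviscid criterion of Theorem~\ref{thm:blow_up_Euler}. This is precisely where \emph{strong} convergence in the $L^2((0,T),H^1)$-topology (rather than mere weak convergence, which would only give lower semicontinuity and an inequality) is essential, and it is exactly this convergence that Theorem~\ref{fNSV_to_NS}(i)--(ii) supplies. One must also verify that \eqref{blowup_NSE_energy} and the Navier-Stokes relation hold as genuine \emph{equalities} rather than inequalities; for fNSV this follows from the Lions--Magenes lemma under the regularity guaranteed by $\bu^0\in H^{r+s}$, and for Navier-Stokes it is the well-known energy equality enjoyed by strong solutions, available here because the contradiction hypothesis posits a smooth solution on $[0,T^{**}]$.
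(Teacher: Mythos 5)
Your proposal is correct and follows essentially the same route as the paper's proof: derive the exact energy balance for fNSV, pass to the limit $\alpha\to 0$ using the strong $L^2((0,T),H^1)$ convergence from Theorem~\ref{fNSV_to_NS} together with the additivity of $\limsup$ (Proposition~\ref{additivity}), and contradict the energy equality for strong Navier--Stokes solutions. Your explicit remarks on why strong (not merely weak) convergence of the dissipation integral is needed make the argument slightly more careful than the paper's own write-up, but the substance is identical.
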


\begin{proof}
For the sake of contradiction, assume the solution $\bu$ of the Euler equations is regular on $[0,T^{**}]$. \dela{Taking an inner product of \eqref{fNSV_functional} (with $\nu > 0$) with $\bu_{\alpha, r}$ and integrating by parts gives} Then the following identity holds. 

\begin{align}\notag
    \frac{1}{2}\frac{d}{dt} \lp \norm{\bu_{\alpha, r}}_{L^2}^2 + \alpha^{2r}\norm{A^{r/2}\bu_{\alpha, r}}_{L^2}^2 \rp +\nu\norm{A^{1/2}\bu_{\alpha, r}}_{L^2}^2 = 0.
\end{align}
Integrating this equality gives 
\begin{align}\label{modified_energy_equality_fNSV}
    \norm{\bu_{\alpha, r}(t)}_{L^2}^2 + \alpha^{2r}\norm{A^{r/2}\bu_{\alpha, r}(t)}_{L^2}^2 + 2\nu \int_0^t \norm{A^{1/2} \bu_{\alpha, r}(s)}_{L^2}^2 \, ds = \norm{\bu^0}_{L^2}^2 + \alpha^{2r}\norm{A^{r/2}\bu^0}_{L^2}^2,
\end{align}
for all $t \geq 0$. Now, taking a $\limsup$ of \eqref{modified_energy_equality_fNSV} and using Proposition \ref{additivity} yields 
\begin{align}\label{contradiction_fNSV}
    \norm{\bu(t)}_{L^2}^2 + \limsup\limits_{\alpha \to 0} \alpha^{2r} \norm{A^{r/2}\bu(t)}_{L^2}^2 + \limsup\limits_{\alpha \to 0}2 \nu \int_0^t \norm{A^{1/2}\bu_{\alpha, r}(s)}_{L^2}^2 \, ds = \norm{\bu^0}_{L^2}^2.
\end{align}
for all $t \in [0,T^{**}]$ since $\bu_{\alpha, r} \to \bu$ in $L^2([0,T^{**}], H^1)$ by \eqref{ur_to_u_fNSV}. But this is to say that 
\begin{align}
    \limsup\limits_{\alpha \to 0}\alpha^{2r}\norm{A^{r/2}\bu_{\alpha, r}(t)}_{L^2}^2 = 0,
\end{align}
since strong solutions of the Navier-Stokes equations conserve energy, i.e. \[\norm{\bu(t)}_{L^2}^2 + 2\nu \int_0^t \norm{A^{1/2}\bu}_{L^2}^2 \, ds = \norm{\bu^0}_{L^2}^2,\] 
for $t \in [0,T^{**}]$. Thus, $\bu$ must develop a singularity on the time interval $[0,T^{**}]$. 
\end{proof}

\section{Conclusion}
We have shown that the Voigt technique can be extended to the fractional case, at least in the case of periodic boundary conditions.  This fractional case is of practical interest, as it allows for dissipation, unlike in the case of the standard Voigt regularization.   Moreover, we proved that as the regularization parameter $\alpha\to0$, that solutions to the approximate models converge to those of the original system, indicating that these models may be of use in applications such as turbulence modeling.  We also proved a vanishing viscosity result in the setting of fixed $\alpha$.  In addition, in the spirit of \cite{Khouider_Titi_2008,Larios_Titi_2009} we proved several blow-up criteria for the original systems in terms of Voigt regularization which could be used, e.g., in computational or analytical searches for blow-up of the original equations, as in \cite{Larios_Petersen_Titi_Wingate_2015}.  In future work, we plan to investigate computationally the turbulent statistics of fractional Voigt models, and also the associated blow-up criteria.  Moreover, studying the case of domains with a physical boundary seems non-trivial, but worthwhile, as the fractional Helmholtz operator (at least with $\frac12\leq r\leq1$) seems to require no additional boundary conditions than in the case of no-slip boundary conditions for the Navier-Stokes equations.

\appendix
\section{}
\noindent
Here, we provide a brief justification of the additivity referred to in the proofs of Theorems \ref{thm:blow_up_Euler} and \ref{thm:blow_up_NSE}.  While this proposition is elementary, it appears to be underrepresented in the literature. Furthermore, it has prompted questions during talks by the authors (e.g., whether equations like \eqref{contradiction} and \eqref{contradiction_fNSV} should instead be inequalities). For clarity and completeness, we include it here.
\begin{proposition}\label{additivity}
Suppose the sequence $\set{a_n}_{n\in\nN}$ converges, say $a_n\rightarrow a$, and let $b_n$ be any sequence.  Then
$\limsup_{n\rightarrow\infty}(a_n + b_n)
= a + \limsup_{n\rightarrow\infty}(b_n)$.
\end{proposition}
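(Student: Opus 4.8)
The plan is to establish the identity by a direct $\varepsilon$-argument, exploiting the fact that the convergence of $\{a_n\}$ confines its tail to an arbitrarily narrow band around $a$. First I would fix $\varepsilon > 0$ and use $a_n \to a$ to produce an index $N$ with $a - \varepsilon < a_n < a + \varepsilon$ for all $n \ge N$. Adding $b_n$ to each side yields the two-sided bound $(a - \varepsilon) + b_n < a_n + b_n < (a + \varepsilon) + b_n$, valid for every $n \ge N$.

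Next I would apply $\limsup$ across this chain of inequalities. Since $\limsup$ is monotone under the pointwise order, and since adding a constant $c$ to a sequence merely shifts its $\limsup$ by $c$ — a fact that follows at once from the translation-invariance of suprema, $\sup_{n \ge m}(c + b_n) = c + \sup_{n \ge m} b_n$, by passing to the limit in $m$ — the upper bound gives $\limsup_n (a_n + b_n) \le a + \varepsilon + \limsup_n b_n$, while the lower bound gives $\limsup_n (a_n + b_n) \ge a - \varepsilon + \limsup_n b_n$. Letting $\varepsilon \to 0$ in both inequalities then forces equality, which is exactly the claimed identity.

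The only point requiring a moment of care is that $\limsup_n b_n$ may equal $+\infty$ or $-\infty$; however, because $a$ is a finite real number, the shift identity $\limsup_n (c + b_n) = c + \limsup_n b_n$ and the closing arithmetic remain valid under the usual conventions $a + (\pm\infty) = \pm\infty$, so no genuine difficulty arises. In truth there is no hard step here: the statement is a routine consequence of the order-theoretic definition of $\limsup$ together with the translation-invariance of suprema, and the argument is fully self-contained once the band $a - \varepsilon < a_n < a + \varepsilon$ is in hand.
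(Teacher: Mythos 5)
Your proof is correct, but it takes a genuinely different route from the paper's. The paper splits the identity into two inequalities proved by different mechanisms: the upper bound $\limsup_n(a_n+b_n)\le a+\limsup_n b_n$ comes from the general subadditivity of $\limsup$ combined with $\limsup_n a_n=a$, while the lower bound is obtained by extracting a subsequence $b_{n_k}$ realizing $\limsup_n b_n$ and using that $a_{n_k}\to a$ along the same indices, so that $\limsup_n(a_n+b_n)\ge\lim_k(a_{n_k}+b_{n_k})=a+\limsup_n b_n$. You instead trap the tail of $a_n$ in the band $(a-\varepsilon,a+\varepsilon)$, sandwich $a_n+b_n$ between two translates of $b_n$, and get both inequalities at once from monotonicity and translation-invariance of $\limsup$, then let $\varepsilon\to0$. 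Your argument is more elementary in that it never invokes the characterization of $\limsup$ via subsequential limits, and it makes the extended-real edge cases ($\limsup_n b_n=\pm\infty$) explicit, which the paper's subsequence argument handles only implicitly; the paper's version is shorter and leans on two standard facts that need no $\varepsilon$ bookkeeping. Both proofs are complete and correct.
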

\begin{proof}
  First, by the subadditive property of the limsup, it holds that \[\limsup_{n\rightarrow\infty}(a_n + b_n)\leq \limsup_{n\rightarrow\infty}(a_n) + \limsup_{n\rightarrow\infty}(b_n) = a + \limsup_{n\rightarrow\infty}(b_n).\]   
  Next, choose a subsequence $b_{n_k}$ such that $\lim_{k\rightarrow\infty}b_{n_k} = \limsup_{n\rightarrow\infty}(b_n)$.  Note that $a_{n_k}\rightarrow a$.  Then $\limsup_{n\rightarrow\infty}(a_n + b_n)\geq \limsup_{k\rightarrow\infty}(a_{n_k} + b_{n_k})
=\lim_{k\rightarrow\infty}(a_{n_k} + b_{n_k})
= a + \limsup_{n\rightarrow\infty}(b_n)$.    
\end{proof}

\section*{Data Availability Statement}
The manuscript has no associated data.

\section*{Conflict of Interest Statement}
On behalf of all authors, the corresponding author states that there is no conflict of interest.

\bigskip 
\hrule 
\bigskip 
\section*{Acknowledgments}
 \noindent
 We would like to thank Prof. Edriss S. Titi for interesting discussions.
Authors Z.B. and A.L. would like to thank the Isaac Newton Institute for Mathematical Sciences, Cambridge, for support and warm hospitality during the programme ``Mathematical aspects of turbulence: where do we stand?'' where work on this paper was undertaken. This work was supported by EPSRC grant no EP/R014604/1.
The research of A.L. and I.S. was supported in part by NSF Grants CMMI-1953346 and DMS-2206741.  A.L. was also supported by USGS Grant No. G23AC00156-01.

\end{document}